\documentclass[12pt]{article}
\usepackage[utf8]{inputenc}
\usepackage{amsmath, amsthm, amsfonts, 
mathrsfs,enumerate,bm}
\usepackage{color}
\usepackage{ascmac}
\newtheorem{theorem}{Theorem}[section]
\newtheorem{lemma}[theorem]{Lemma}
\newtheorem{proposition}[theorem]{Proposition}
\newtheorem{cor}[theorem]{Corollary}

\newtheorem{ex}[theorem]{Example}
\newtheorem{ass}[theorem]{Assumption}

\setlength{\topmargin}{0cm}
\setlength{\oddsidemargin}{0cm}
\setlength{\evensidemargin}{0cm}
\setlength{\textheight}{220mm}
\setlength{\textwidth}{160mm} 

\allowdisplaybreaks[3]

\makeatletter
 
 \@addtoreset{equation}{section}
\makeatother

\title{The SIML method without microstructure noise}
\author{
Jir\^o Akahori\footnote{
Department of Mathematical Science, 
Ritsumeikan University, 
1-1-1 Nojihigashi, Kusatsu, Shiga, 525-8577, Japan 
(e-mail: {\tt akahori@se.ritsumei.ac.jp})},
Ryuya Namba\footnote{
Department of Mathematical Science,
Kyoto Sangyo University, 
Motoyama, Kamigamo, 
Kita-ku, Kyoto, 603-8555 Japan 
(e-mail: {\tt rnamba@cc.kyoto-su.ac.jp})}, 
and Atsuhito Watanabe\footnote{
Kusatsu 525-8529, Japan
Garduate School of Science and Engineering, Ritsumeikan University, 1-1-1, Noji-Higashi, Kusatsu, Shiga, 525-8577, Japan 
(e-mail: {\tt atsu.watanabe0507@gmail.com})} \footnote{Corresponding author}}
\date{}

\begin{document}

\maketitle
\begin{abstract}
The SIML (abbreviation of Separating Information Maximal Likelihood) method, 
has been introduced by N. Kunitomo and S. Sato and their collaborators to estimate the integrated volatility of high-frequency data that is assumed to be an It\^o process
but with so-called microstructure noise. 
The SIML estimator turned out to share many properties
with the estimator introduced by P. Malliavin and M.E. Mancino. 
The present paper 
establishes the consistency and the asymptotic normality 
under a general sampling scheme but without microstructure noise. 
Specifically, a fast convergence shown for Malliavin--Mancino estimator by E. Clement and A. Gloter
is also established for the SIML estimator. 

\vspace{2mm}
\noindent
{\bf Mathematics Subject Classification (2020):} 62G20, 60F05, 60H05.

\vspace{2mm}
\noindent
{\bf Keywords:} SIML method, Malliavin--Mancino's Fourier estimator, non-parametric estimation, consistency, asymptotic normality.
\end{abstract}

\section{Introduction}

\subsection{The Problem}\label{Problem}
Throughout the present paper, we consider a complete probability space 
$ (\Omega, \mathcal{F}, \mathbf{P}) $,
which supports a $d$-dimensional Wiener process $ \mathbf{W} \equiv (W^1, W^2, \dots, W^d) $ on the time interval $[0,1]$.
We denote by $ \mathcal{F}_t,\, t \in [0, 1], $
the complete $ \sigma $-algebra generated by 
$ \{\mathbf{W}_s : 0 \leq s \leq t \} $ and by $ L^2_a [0,1] $
the space of $ \{\mathcal{F}_t \} $-adapted processes $ \theta $ with 
$ \mathbf{E} [\int_0^1 |\theta(s)|^2 \mathrm{d}s ] < +\infty $. 

Let $J \in \mathbb{N}$. Consider an It\^o process 
\begin{equation}\label{Ito}
\begin{split}
    X^j_t = X^j_0 
    + \int_0^t b^j (s) \,\mathrm{d}s 
    + \sum_{r=1}^d \int_0^t \sigma^j_r (s) \, \mathrm{d} W^r_s,
\end{split}
\end{equation}
for $ j=1, 2, \dots, J $ and $ t \in [0,1] $, 
where $ b^j, \sigma^j_r \in L^2_a [0,1] $ 
for all $ j =1, 2, \dots, J$ 
and $ r =1, 2, \dots, d $. 

We take 
the observations 
for the $ j $-th component 
of the process at time $ 0 = t^j_0 <t^j_1 < \cdots< t^j_{n_j} = 1 $ for $ j = 1, 2, \dots, J$. Here we conventionally assume that we observe the initial price and the final price but the assumption can be relaxed. 
We are interested in  
constructing an estimator 
$ (V^{j,j'})_{j, j'=1, 2, \dots, J} $
of
integrated volatility matrix defined by
\begin{equation*}
    \int_0^t \Sigma^{j,j'} (s) \,\mathrm{d}s
    :=\sum_{r=1}^d \int_0^t \sigma_r^j (s) \sigma_r^{j'} (s)\, \mathrm{d}s,
    \qquad t \in [0, 1],
\end{equation*}
out of the observations, 
which is consistent 
in the sense that each
$ V^{j,j'} $ converges to 
$ \int_0^t \Sigma^{j,j'} (s) \,{\rm d}s $
in probability as $ n := \min_{1\leq j \leq J} n_j \to \infty $, under the condition that 
\begin{equation}\label{rho}
    \rho_n := \max_{j,k} |t^j_k - t^j_{k-1}| \to 0
\end{equation}
as $ n \to \infty $.

\subsection{SIML method}\label{SIMLintro}

Let us briefly review 
the {\it separating information maximum likelihood} (SIML for short) estimator, introduced 
by N. Kunitomo together with his collaborator S. Sato 
in a series of papers \cite{KS08a, KS08b, KS10, KS2, KS13} 
where the observations are assumed to be with {\it microstructure noise}. 
Namely, the observations are 
\begin{equation}
    Y^j (t^j_k) \equiv X^j (t^j_k)+ v^j_k \label{microstructure noise}
\end{equation}
for $ k = 0, 1, \dots, n_j $ and $ j = 1, 2, \dots, J$, 
where $ \{v^j_k\}_{j, k} $
is a family of zero-mean $i.i.d.$ random variables with finite fourth moment, 
which are independent 
of the Wiener process $ \mathbf{W} $. 

Let the observations 
be equally spaced, that is, $ t^j_k \equiv k/n $. 
The estimator of the SIML method  
is given by 
\begin{equation}\label{SIMLestimator}
    \begin{split}
        V_{n,m_n}^{j,j'}
        := \frac{n}{m_n} \sum_{l=1}^{m_n} \left( \sum_{k=1}^{n^j}
        p^{n_j}_{k,l} \Delta Y^j_k \right)
        \left(\sum_{k'=1}^{n^{j'}} p^{n_{j'}}_{k',l} \Delta Y^{j'}_{k'} \right),  
    \end{split}
\end{equation}
where $ m_n (\ll n) $ is an integer,
\begin{equation*}
    p^n_{k,l} = \sqrt{\frac{2}{n+ \frac{1}{2}}}
    \cos \left( \left(l - \frac{1}{2}\right)
    \pi \left( \frac{k-\frac{1}{2}}{n+\frac{1}{2}}
    \right) \right)
\end{equation*}
for $ k,l=1, 2, \dots, n $, $ n \in \mathbf{N} $, 
and we understand 
$ \Delta $ to be the difference operator given by
$ (\Delta a)_k = a_k -a_{k-1} $ for a sequence $\{a_k\}_k$. 
We then write 
\begin{equation*}
\Delta Y^j_k = Y^j_{t^j_{k}} - Y^j_{t^j_{k-1}}. 
\end{equation*}

They have proved the following two properties.
\begin{enumerate}[{\bf (i)}]
    \item ({\it the consistency}):
    the convergence in probability of $ V^{j,j'}_{n, m_n} $ to $ \int_0^1 \Sigma^{j,j'} (s) \,\mathrm{d}s $ as $ n \to \infty $
    is attained, provided 
    that $ m_n = o (n^{1/2})$, and
    \item ({\it the asymptotic normality of the error}): the stable convergence of 
    \begin{equation*}
    \begin{split}
       & \sqrt{m_n}\left(V^{j,j'}_{n, m_n}-\int_0^1 \Sigma^{j,j'} (s) \,\mathrm{d}s \right) \\
        & \to N \left(0, \int_0^1 \left(\Sigma^{j,j}(s) \Sigma^{j',j'}(s) + (\Sigma^{j,j'} (s))^2 \right) \mathrm{d}s
        \right)
        \end{split}
    \end{equation*}
    holds true as $ n \to \infty$ 
    if $ m_n = o (n^{2/5})$,
\end{enumerate}
under some mild conditions on $ b $
and $ \Sigma $. See \cite{KSK} for more details. 
In the book \cite{KSK}, more properties of the SIML estimator are proven. 
Here we just pick up some of them. 

\subsection{SIML as a variant of Malliavin--Mancino method} 

The Malliavin--Mancino's Fourier (MMF for short) method, 
introduced in \cite{MMFS} and \cite{MMAS}, 
is an estimation method 
for the spot volatility $ \Sigma^{j, j'} (s) $
appeared in Section \ref{Problem}, 
by constructing an estimator of the 
Fourier series of $ \Sigma^{j, j'} $. 
The series consists of estimators 
of Fourier coefficients
given by 
\begin{equation}\label{Fourier-estimator}
    \begin{split}
\widehat{\Sigma^{j,j'}_{n,m_n}}(q)  
:=\frac{1}{m_n} \sum_{l=1}^{m_n} \left( \sum_{k=1}^{n^j}e^{2\pi\sqrt{-1} (l+q) t^j_{k-1} }
        \Delta Y^j_k \right)
        \left(\sum_{k'=1}^{n^{j'}}e^{-2\pi\sqrt{-1} l t_{k'-1}^{j'} }  \Delta Y^{j'}_{k'} \right) 
    \end{split}
\end{equation}
for $ q \in \mathbf{Z} $. 
As we see, 
$ \widehat{\Sigma^{j,j'}_{n,m_n}}(0)$
is quite similar to 
the SIML estimator \eqref{SIMLestimator}. 

The main concern of the SIML estimator 
is to eliminate the microstructure noise, 
and it was derived from a heuristic observation that it might maximize 
a virtual likelihood function (see \cite[Chapter 3, Section 2]{KSK}). 
On the other hand, 
the MMF method aims at the estimation of 
spot volatilities, though the cut-off effects 
have been recognized well among the Italian school, especially by
 M. Mancino and S. Sanfelici (see \cite{MS1} and \cite{MS2})\footnote{
It has been pointed out  
that the ``bias"
$
    \mathbf{E} [ V^{j,j'} - \widehat{\Sigma^{j,j'}_{n,m_n}}(0)]
$
converges to zero and 
 the mean square error
$
    \mathbf{E} [ (V^{j,j'} - \widehat{\Sigma^{j,j'}_{n,m_n}}(q) )^2
$
does not diverge
when $ m_n = o (n) $ as $ n \to \infty $,
which is not the case with the realized volatility.
}. 
Nonetheless, the two methods reached to a similar solution, {\em independently}. 
This is really striking and worth further investigations.

The task of the present paper is 
to establish limit theorems for the SIML estimator, 
given below as \eqref{SIMLestimator2} 
with a more general sampling scheme than \eqref{KSSS},
under the {\it no-microstructure noise circumstance}.
We mostly employ the techniques from \cite{ClGl}.
Some of them are directly applicable to our framework, 
but some are not. 
The main difficulty comes from 
the nature of the kernel \eqref{new Dirichlet1}.
Unlike the Dirichlet kernel, its integral over $ [0,1] $
is not unit $ \times 1/m $, which causes some serious troubles. 
Among the contributions of the present paper, 
establishing the fast convergence corresponding to the one studied in \cite{ClGl} 
as well as the limit theorems under the general sampling scheme is to be the most important one. 
The study of the limit theorems under the general sampling scheme with the cases with microstructure noise is postponed to a forthcoming paper.

\subsection{Organization of the rest of the present paper}
The rest of the present paper is divided into two parts. 
The former part, Section \ref{sec:GFE}, studies 
the consistency of the estimator. The latter part, Section \ref{sec:LTS}, investigates the asymptotic normality of the estimator.
Both sections are structured to be {\em pedagogical}.
Explaining the intuitions behind the setting and the assumptions for the main theorems, the essence of the proof is given in advance of the statement. 
The proofs are given concisely in the last subsection.

\section{Consistency of the SIML estimator in the absence of microstructure noise}\label{sec:GFE}

\subsection{Setting}
To state our results and to give proofs for them in a neat way,
we restate the setting with some new notations. 
First, for a given 
observation time grid
$ \Pi := \{ (t_k^j)_{k= 0,1,\cdots,n_j}: j=1, 2, \dots, J \} $, 
we define 
\begin{equation*}
\begin{split}
    {\Pi}^* &:= \{\varphi = (\varphi_1 (s), \cdots, \varphi_J(s)) : [0,1] \to [0,1]^J \mid \text{{\bf (A1)} and {\bf (A2)}} \}.
\end{split}
\end{equation*}
where we put

\begin{itemize}
\item[{\bf (A1):}]
    The image $\varphi_j ([t^j_{k-1}, t^j_{k}) )$
    is one point in $[t^j_{k-1}, t_k^j ]$ for $k=1, 2, \dots, n_j$ and $j=1, 2, \dots, J $, 
\item[{\bf (A2):}]
    It holds that $ \varphi^j ([t^j_{k-1}, t^j_{k}) ) \ne \varphi^j ([t^j_{k}, t^j_{k+1}) )$ for $k=1, 2, \dots, n_j$ and $j=1, 2, \dots, J $.
\end{itemize}
By using a function in $ \Pi^* $, we can rewrite the Riemann sums in \eqref{SIMLestimator} as stochastic integrals for which It\^o's formula is applicable. 

As remarked in the introduction, we will be working on the 
situations where $ v^j_K \equiv 0 $ henceforth.
Thus, the SIML estimator \eqref{SIMLestimator} can now be redefined as
    \begin{align}
        &V_{n,m_n}^{j,j'} \nonumber\\
        &:= \frac{2n}{n+\frac{1}{2}}\frac{1}{m_n} \sum_{l=1}^{m_n} \left(\int_0^1 
        \cos \left(l-\frac{1}{2} \right) \pi \varphi^j (s) \,\mathrm{d}X^j_s
        \right) \left(\int_0^1 
        \cos \left(l-\frac{1}{2} \right) \pi \varphi^{j'} (s) \, \mathrm{d}X^{j'}_s
        \right),
        \label{SIMLestimator2}
    \end{align}
where $ \varphi \in ((k/n)_{k=1}^n, \cdots,(k/n)_{k=1}^n)^* $ is defined by 
\begin{equation}\label{KSSS}
\varphi^j \left( \left[\frac{k-1}{n}, \frac{k}{n}\right) \right) = \frac{2k-1}{2n+1}
= \frac{1}{n} \left( k-1 + \frac{n-k+1}{2n+1}\right) \in \left[\frac{k-1}{n}, \frac{k}{n}\right)
\end{equation}
for $ k=1, 2, \dots, n $ and $ j=1, 2, \dots, J $. 

In the sequel, we rather work on general sampling scheme, that is, 
general $ \Pi $ and $ \varphi \in \Pi^*$, 
under the condition of \eqref{rho}. 
In doing so, 
the equation \eqref{SIMLestimator2} is the definition of the estimator $ V^{j,j'}_{n,m_n} $, leaving \eqref{SIMLestimator}
as a special case. 

We also introduce a symmetric kernel $ \mathcal{D}^{j,j'}_m : [0,1]\times[0,1] \to \mathbf{R} $ associated with $ \varphi \in \Pi^* $ by
\begin{align}\label{new Dirichlet1} 
  \begin{aligned}
        \mathcal{D}_{m}^{j, j'}(u, s)
       &:=\frac{1}{2m}\frac{\sin{m\pi\left(\varphi^j(u)+\varphi^{j'}(s)\right)}}{\sin{\pi\left(\varphi^j(u)+\varphi^{j'}(s)\right)/2}} + \frac{1}{2m}\frac{\sin{m\pi\left(\varphi^j(u)-\varphi^{j'}(s)\right)}}{\sin{\pi\left(\varphi^j(u)-\varphi^{j'}(s)\right)/2}} \\
  \end{aligned}
    \end{align} 
for $ u,s \in [0,1] $. 
Then, by applying It\^o's formula to the 
products of the stochastic integrals in \eqref{SIMLestimator2}, we have 
\begin{align}\label{ItoF}
    \begin{aligned}
 \frac{n+\frac{1}{2}}{n}{V}_{n,m_n}^{j,j'} &= 
    \int_0^1 \mathcal{D}_{m_n}^{j, j'}(s, s) \, \Sigma^{j,j'} (s) \, 
    \mathrm{d}s
    + \left( \int_0^1 \int_0^s +
   \int_0^1 \int_0^u \right)
     \mathcal{D}_{m_n}^{j, j'}(u, s) \,  {\rm d}X^j_u {\rm d}X^{j'}_s  
    \end{aligned}
\end{align}
since 
\begin{align}\label{key equality}
        &\frac{2}{m}\sum_{l=1}^{m}\cos\left(l-\frac{1}{2}\right)\pi u \cos\left(l-\frac{1}{2}\right)\pi s \nonumber\\
        &=
       \frac{1}{2m}\frac{\sin{m\pi\left(u+s)\right)}}{\sin{\pi\left(u+s\right)/2}} + \frac{1}{2m}\frac{\sin{m\pi\left(u-s\right)}}{\sin{\pi\left(u-s\right)/2}}
        \qquad u, s \in [0, 1].
\end{align}

\subsection{Discussions for possible sampling schemes}
In this section, we will discuss 
how the sampling scheme $ \Pi $ and $ \varphi \in \Pi^* $ should be. 
As we will see, 
we necessarily have 
\begin{align}\label{sampligcond}
    \int_0^1 \mathcal{D}_{m_n}^{j, j'}(s, s) g (s) \,\mathrm{d}s
    \to \int_0^1 g(s) \, \mathrm{d}s \quad\text{as $ n \to \infty $ for any $ g \in C [0,1]$.}
\end{align}
to obtain $ V^{j,j'}_{n, m_n} \to \int_0^1 \sigma^{j,j'} (s) \,\mathrm{d}s $ in probability. 

First, 
we consider the cases where
\begin{align}\label{MC}
   \text{ $ m_n \to \infty$ as $ n \to \infty$}
\end{align}
and 
\begin{align}\label{MMC}
   \text{$ \rho_n m_n \to 0 $ as $ n \to \infty$.}
\end{align}
\begin{lemma}\label{l21}
Under the conditions \eqref{rho},
\eqref{MC} and 
\eqref{MMC}, we have \eqref{sampligcond}. 
\end{lemma}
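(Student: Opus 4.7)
The plan is to expand the diagonal kernel by the product-to-sum identity \eqref{key equality} (read right-to-left) as
\begin{equation*}
\mathcal{D}_{m_n}^{j,j'}(s,s)=\frac{2}{m_n}\sum_{l=1}^{m_n}\cos\bigl((l-\tfrac12)\pi\varphi^j(s)\bigr)\cos\bigl((l-\tfrac12)\pi\varphi^{j'}(s)\bigr),
\end{equation*}
so that the left-hand side of \eqref{sampligcond} equals $\tfrac{2}{m_n}\sum_{l=1}^{m_n}I_l$, with
$I_l:=\int_0^1\cos((l-\tfrac12)\pi\varphi^j(s))\cos((l-\tfrac12)\pi\varphi^{j'}(s))g(s)\,\mathrm{d}s$.
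From here I would split the argument into two sub-steps: a \emph{discretization} step that replaces each $\varphi$ by the identity map (powered by \eqref{MMC}), followed by a \emph{frequency-averaging} step that handles the resulting Ces\`aro-like sum (powered by \eqref{MC}).

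For the discretization step, assumption \textbf{(A1)} together with \eqref{rho} gives $|\varphi^{j}(s)-s|\le\rho_n$ uniformly in $s\in[0,1]$, and the $1$-Lipschitz property of cosine then yields
$|I_l-\widetilde I_l|\le(2l-1)\pi\rho_n\|g\|_\infty$, where $\widetilde I_l:=\int_0^1\cos^2((l-\tfrac12)\pi s)\,g(s)\,\mathrm{d}s$. Summing $\sum_{l=1}^{m_n}(2l-1)=m_n^2$ and dividing by $m_n/2$ bounds the total replacement error by $2\pi m_n\rho_n\|g\|_\infty$, which vanishes by \eqref{MMC}.

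For the frequency-averaging step, the identity $\cos^2\theta=\tfrac12+\tfrac12\cos(2\theta)$ rewrites the remaining sum as
\begin{equation*}
\frac{2}{m_n}\sum_{l=1}^{m_n}\widetilde I_l=\int_0^1 g(s)\,\mathrm{d}s+\frac{1}{m_n}\sum_{l=1}^{m_n}\widehat g_l,\qquad \widehat g_l:=\int_0^1\cos((2l-1)\pi s)\,g(s)\,\mathrm{d}s.
\end{equation*}
Since $g\in C[0,1]\subset L^1[0,1]$, the Riemann--Lebesgue lemma forces $\widehat g_l\to 0$, so its Ces\`aro mean $\tfrac{1}{m_n}\sum_{l=1}^{m_n}\widehat g_l$ tends to $0$ by \eqref{MC}. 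Combining the two sub-steps proves \eqref{sampligcond}.

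I expect no genuine technical obstacle here; the content of the lemma is conceptual rather than computational, namely that the hypotheses \eqref{MMC} and \eqref{MC} play complementary and essentially independent r\^oles — the former pays for the discretization cost, while the latter activates Ces\`aro convergence of the Fourier-type coefficients of $g$. The same two-step pattern should then serve as a template for the more delicate limit statements later in Section \ref{sec:GFE}.
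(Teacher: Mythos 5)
Your proof is correct. The first half (the discretization step) is essentially the paper's own argument: both exploit the Lipschitz bound on the cosine together with $|\varphi^j(s)-s|\le\rho_n$ from \textbf{(A1)}, and both pay a replacement cost of order $m_n\rho_n$, which \eqref{MMC} absorbs. Where you genuinely diverge is the second half. The paper works with the closed-form diagonal kernel $\mathcal{D}_m(s,s)=1+\frac{1}{2m}\frac{\sin(2m\pi s)}{\sin(\pi s)}$ and estimates $\frac{1}{2m}\int_0^1 g(s)\frac{\sin(2m\pi s)}{\sin(\pi s)}\,\mathrm{d}s$ directly, cutting off an $\varepsilon$-neighbourhood of the endpoints and using $|\sin(2m\pi s)/\sin(\pi s)|\lesssim 1/s$ elsewhere; with $\varepsilon=m^{-1}$ this yields the quantitative statement \eqref{qDir}, i.e.\ the diagonal error is $o(m^{-\alpha})$ for every $\alpha<1$. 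You instead write $\cos^2\theta=\tfrac12+\tfrac12\cos(2\theta)$ and invoke Riemann--Lebesgue plus Ces\`aro averaging, which is softer and shorter but produces no rate: for a general $g\in C[0,1]$ the coefficients $\widehat g_l$ can decay arbitrarily slowly, so your argument proves exactly \eqref{sampligcond} and nothing more. For the lemma as stated this is entirely sufficient; just be aware that the paper's extra mileage \eqref{qDir} is the kind of quantitative control one wants when discussing how fast $\int_0^1\mathcal{D}_{m_n}(s,s)g(s)\,\mathrm{d}s-\int_0^1 g(s)\,\mathrm{d}s$ vanishes relative to the normalization $m_n^{1/2}$ in Section \ref{sec:LTS}, so your template would need strengthening (e.g.\ by assuming more regularity on $g$ or reverting to the kernel estimate) if a rate were required later.
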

\begin{proof}
Put 
\begin{align}
        \mathcal{D}_m (u,s) 
        &:= \frac{2}{m}\sum_{l=1}^{m}\cos\left(l-\frac{1}{2}\right)\pi u \cos\left(l-\frac{1}{2}\right)\pi s \label{D-exp1}\\
       & = \frac{1}{m} \sum_{l=1}^{m}
      \left( \cos \left(l-\frac{1}{2}\right)\pi (u+s) 
       + \cos \left(l-\frac{1}{2}\right)\pi (u-s) \right)
        \qquad u, s \in [0, 1] \nonumber
\end{align}
Then, on one hand, we have 
\begin{align*}
    \mathcal{D}_{m}^{j, j'}(u, s) = \mathcal{D}_m (\varphi^j(u),
    \varphi^{j'}(s) ), 
\end{align*}
and 
\begin{align*}
    \begin{aligned}
        |\mathcal{D}_{m}^{j, j'}(u, s) -\mathcal{D}_m (u,s) )| \leq  2m|\varphi^j (u)-u| + 2m|\varphi^{j'} (s) - s|, 
        \qquad u, s \in [0, 1],
    \end{aligned}
\end{align*}
since it holds in general that
\begin{align*}
|\cos c x - \cos c y|\leq c|x-y|  
\end{align*}
for a constant $ c > 0 $.
Therefore, under the assumption \eqref{rho}, 
\begin{align*}
    \begin{aligned}
        & \int_0^1 (\mathcal{D}_{m_n}^{j, j'}(s, s) -\mathcal{D}_{m_n} (s,s) )g (s) \,\mathrm{d}s \leq 
        4 \rho_n m_n \Vert g \Vert_{L^2}
        \to 0 
    \end{aligned}
\end{align*}
as $ n \to \infty $. 
On the other hand, since 
\begin{align*}
    \begin{aligned}
       \mathcal{D}_{m}(s, s) 
       &= 1 +  \frac{1}{2m}\frac{\sin (2m\pi s)}{\sin{(\pi s)}}, 
    \end{aligned}
\end{align*}
we have
\begin{align*}
\begin{aligned}
 &  \int_0^1 \mathcal{D}_{m}(s, s) g (s) \,\mathrm{d}s 
    -\int_0^1 g (s) \, \mathrm{d}s = \frac{1}{2m}  \int_0^1 g (s)\frac{\sin (2m\pi s)}{\sin{(\pi s)}} \, \mathrm{d}s
\end{aligned}
\end{align*}
Since
\begin{align*}
    \left| \frac{\sin (2m\pi s)}{\sin{(\pi s)}}\right| \leq 
    \begin{cases}
    \frac{1}{2 \pi s}\,
    & s \in (0, 1/2] 
\\
     \frac{1}{2\pi (1-s)} & s \in [1/2, 1)
    \end{cases},
\end{align*}
by setting $ A_\varepsilon := [0,\varepsilon)  \cup (1-\varepsilon, 1]$, 
we have 
\begin{align}\label{loge}
\begin{aligned}
   & \int_{[0,1]\setminus A_\varepsilon} \left| \frac{\sin (4m\pi s)}{2m \sin{(2\pi s)}}\right| \, \mathrm{d}s 
   \leq 
   \int_{[\varepsilon, 1/2]}\frac{1}{4m\pi s} \,
   \mathrm{d}s +\int_{[1/2, 1-\varepsilon]} \frac{1}{4m\pi (1-s)} \, 
   \mathrm{d}s \\
   & \leq \frac{1}{2m\pi} (\log \varepsilon^{-1} - \log 2)
   \end{aligned}
\end{align}
for arbitrary $ \varepsilon \in (0, 1/4) $.
Using \eqref{loge} and the bound 
\begin{align*}
    \left| \frac{\sin (2m\pi s)}{2m \sin{(\pi s)}}\right| = \left|
    \frac{1}{m} \sum_{l=1}^m \cos (2l-1) \pi s \right| \leq 1 
\end{align*}
on $ A_\varepsilon $, 
we obtain 
\begin{align*}
    \begin{aligned}
         \left| \int_0^1 g (s)\frac{\sin (2m\pi s)}{\sin{(\pi s)}} \, \mathrm{d}s \right|
        \leq \Vert g \Vert_\infty 
      \left(- \frac{\log (2\varepsilon)}{2m\pi}
      + 2 \varepsilon \right).
    \end{aligned}
\end{align*}
In particular, 
by taking $ \varepsilon = m^{-1} $
for $ m > 4 $, 
we see that, for $ \alpha < 1$,
\begin{align}\label{qDir}
\begin{aligned}
 &  m^{\alpha}_n \left(\int_0^1 \mathcal{D}_{m}(s, s) g (s) \,\mathrm{d}s 
    -\int_0^1 g (s) \mathrm{d}s\right) \to 0  \quad
    \text{as $ m_n, n \to \infty $. }
\end{aligned}
\end{align}
Given the above two observations, 
the proof is complete since 
\begin{align*}
    \begin{aligned}
         \int_0^1 \mathcal{D}_{m_n}^{j, j'}(s, s) g (s) \,\mathrm{d}s 
         =  \int_0^1 \mathcal{D}_{m_n}(s, s) g (s) \,\mathrm{d}s 
         + \int_0^1 (\mathcal{D}_{m_n}^{j, j'}(s, s) -\mathcal{D}_{m_n} (s,s) )g (s) \,\mathrm{d}s. 
    \end{aligned}
\end{align*}

\end{proof}

To work on the ``optimal rate" 
(see \cite[A3]{ClGl}, see also \cite[Remark 3.2]{MRS})
\begin{equation}\label{H4}
        0< \liminf_{m_n, n \to \infty} m_n\rho_n \leq \limsup_{m_n, n \to \infty} m_n\rho_n < \infty.  
\end{equation}
we need to assume \eqref{sampligcond} 
instead of proving. This is the strategy taken in \cite{ClGl}. 
Proposition \ref{eq-sp} below 
justifies the strategy. 

For integers $ l$,
we denote by $ [ l ]_n$
its remainder of the division by $ n $,  
that is,
$ [l]_n \equiv l (\mod n) $ with the property $ 0 \leq [l]_n < n $. 

\begin{proposition}\label{eq-sp}
Let $ t^j_k \equiv k/n $.  

\vspace{1mm}
\noindent
{\rm (i)} Let  
$ \varphi^j ([t^j_{k-1}, t^j_k)) = t^j_{k-1} $
for all $ j $ and $ k $
and assume that $ m_n \to \infty $ and $ [m_n]_n/m_n \to 0 $ as $ n \to \infty $. 
Then, the statement \eqref{sampligcond} holds true. 

\vspace{1mm}
\noindent
{\rm (ii)} On the contrary, let $ J \geq 2 $, $ \varphi^j  ([t^j_{k-1}, t^j_k)) = t^j_{k-1} $ while 
$ \varphi^{j'}  ([t^{j'}_{k-1}, t^{j'}_k)) = t^{j'}_{k} $
for some $ 1 \leq j \neq j' \leq J $. Then, when $ m_n = 2n $,
$ \int_0^1 \mathcal{D}_{m_n}^{j, j'}(s, s) \,\mathrm{d}s \equiv 0 $, that is, 
\eqref{sampligcond} fails to be true. 
\end{proposition}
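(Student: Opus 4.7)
The plan is to compute the diagonal values of the kernel $\mathcal{D}^{j,j'}_{m_n}$ explicitly cell-by-cell on the uniform grid and then exploit $n$-periodicity of the sine for part (i); for part (ii) the conclusion will follow from a direct evaluation at $m_n = 2n$.

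For (i), because $\varphi^j = \varphi^{j'}$ pointwise and both equal the left endpoint $(k-1)/n$ on $[(k-1)/n, k/n)$, the argument of the second sine in \eqref{new Dirichlet1} vanishes, so expanding $\mathcal{D}^{j,j'}_{m_n}(s,s)$ via its cosine-series representation \eqref{D-exp1} yields
\[
\mathcal{D}^{j,j'}_{m_n}(s,s) \;=\; 1 + \frac{\sin(2m_n\pi\varphi^j(s))}{2m_n\sin(\pi\varphi^j(s))},
\]
with value $2$ at $\varphi^j(s) = 0$, exactly parallel to the formula for $\mathcal{D}_m(s,s)$ derived in the proof of Lemma \ref{l21}. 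Setting $\bar{g}_k := n\int_{(k-1)/n}^{k/n}g(s)\,\mathrm{d}s$, the task reduces to showing that
\[
R_n \;:=\; \frac{1}{2m_n n}\sum_{k=2}^n \frac{\sin(2m_n\pi(k-1)/n)}{\sin(\pi(k-1)/n)}\,\bar{g}_k
\]
tends to $0$, the $k=1$ contribution being $O(1/n)$. Writing $m_n = q_n n + r_n$ with $r_n := [m_n]_n$, periodicity gives $\sin(2m_n\pi(k-1)/n) = \sin(2r_n\pi(k-1)/n)$. Applying the telescoping identity $\sin(2r\pi\theta)/(2\sin(\pi\theta)) = \sum_{l=1}^{r}\cos((2l-1)\pi\theta)$ (the same identity underlying \eqref{key equality}) and swapping the order of summation,
\[
R_n \;=\; \frac{1}{m_n n}\sum_{l=1}^{r_n}\sum_{k=2}^n\cos((2l-1)\pi(k-1)/n)\,\bar{g}_k,
\]
whence $|R_n| \leq (r_n/m_n)\,\Vert g\Vert_\infty$, which tends to $0$ under the hypothesis $[m_n]_n/m_n \to 0$.

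For (ii), on $[(k-1)/n, k/n)$ one has $\varphi^j(s) + \varphi^{j'}(s) = (2k-1)/n$ and $\varphi^j(s) - \varphi^{j'}(s) = -1/n$. With $m_n = 2n$ the two sines in \eqref{new Dirichlet1} become $\sin(2\pi(2k-1)) = 0$ and $\sin(-2\pi) = 0$, while the corresponding denominators $\sin(\pi(2k-1)/(2n))$ and $\sin(-\pi/(2n))$ are nonzero for $k = 1, \dots, n$; hence $\mathcal{D}^{j,j'}_{2n}(s,s) \equiv 0$ on $[0,1)$ and the claim is immediate.

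The principal obstacle is part (i). On the diagonal the ratio $\sin(2m_n\pi\theta)/\sin(\pi\theta)$ is generically of order $m_n$, so nothing is small pointwise and cancellation must come from oscillation in $k$. The decisive step is the observation that $\theta = (k-1)/n$ lies on a rational grid of denominator $n$, which collapses $m_n$ in the numerator to its residue $r_n = [m_n]_n$; the telescoping cosine identity then converts an apparently $O(m_n)$ quantity into a bounded Ces\`aro-type average, producing the factor $r_n/m_n$ that is the essence of the hypothesis.
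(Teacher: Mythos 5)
Your proof is correct and follows essentially the same route as the paper's: in (i), reducing $m_n$ to its residue $[m_n]_n$ via periodicity of $\sin$ on the rational grid $\{(k-1)/n\}$ is exactly the paper's cancellation of complete $l$-blocks of length $n$ by roots of unity, and you arrive at the same bound $[m_n]_n/m_n + O(1/n)$. In (ii) you evaluate the closed sine form of the kernel directly (at $m_n=2n$ both numerators $\sin(2\pi(2k-1))$ and $\sin(-2\pi)$ vanish on every cell while the denominators stay nonzero), which is a slightly more economical version of the paper's roots-of-unity computation of the same integral.
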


\begin{proof}
(i) First we note that in this case 
\begin{align*}
    \begin{aligned}
      \int_0^1 \mathcal{D}_{m_n}^{j, j'}(s, s) g(s) \,\mathrm{d}s 
        = \int_0^1 g(s) \,\mathrm{d}s  + \frac{1}{m_n } \sum_{l=1}^{m_n} \sum_{j=1}^{n} \cos \left( (2l-1) \pi \frac{j-1}{n}\right) \int_{(j-1)/n}^{j/n} g(s) \,\mathrm{d}s.
    \end{aligned}
\end{align*}
By denoting $ \zeta_{2n} = e^{\sqrt{-1}\pi/n}$,
we see that
\begin{align*}
    \begin{aligned}
       & \sum_{l= cn+1}^{(c+1)n} \cos \left( (2l-1) \pi \frac{j-1}{n} \right)
       =  \frac{1}{2}\sum_{l= cn+1}^{(c+1)n} (\zeta_{2n}^{(2l-1)(j-1)} + \zeta_{2n}^{(2l-1)(2n-j+1)})= 0
     \end{aligned}
\end{align*}
for $ c \in \mathbf{N} $ and $ j \neq 1 $.
Then, 
\begin{align*}
    \begin{aligned}
      &  \left| \frac{1}{m_n }\sum_{l=1}^{m_n} \sum_{j=1}^{n} \cos \left( (2l-1) \pi \frac{j-1}{n}\right) \int_{(j-1)/n}^{j/n} g(s) \,\mathrm{d}s\right| \\
      & = \left|\frac{1}{m_n }\sum_{l=1}^{[m_n]_n} \sum_{j=1}^{n} \cos \left( (2l-1) \pi \frac{j-1}{n}\right) \int_{(j-1)/n}^{j/n} g(s) \,\mathrm{d}s + \frac{m_n- [m_n]_n}{m_n}\int_0^{1/n} g(s)\,\mathrm{d}s \right| \\
      & \leq 
      \frac{1}{m_n } \sum_{j=1}^{n}
      \sum_{l=1}^{[m_n]_n}
      \left|\cos \left( (2l-1) \pi \frac{j-1}{n}\right)\right| \int_{(j-1)/n}^{j/n} |g(s)| \,\mathrm{d}s +  \int_0^{1/n} |g(s)|\,\mathrm{d}s \\
      &\leq \left( \frac{[m_n]_n}{m_n}
      + \frac{1}{n} \right)\Vert g \Vert_2,
    \end{aligned}
\end{align*}
which converges to zero as $ n \to \infty $ by the assumption. 

(ii) In this case, 
\begin{align*}
    \begin{aligned}
     & \int_0^1 \mathcal{D}_{m_n}^{j, j'}(s, s)\,\mathrm{d}s \\
         & = \frac{1}{n m} \sum_{j=1}^n\sum_{l=1}^{2n}
      \left( \cos \left(l-\frac{1}{2}\right)\pi \left(\frac{2j-1}{n} \right) 
       + \cos \left(l-\frac{1}{2}\right)\pi \left(\frac{1}{n}\right) \right) \\
   &= \frac{1}{2n m} \sum_{j=1}^n\sum_{l=1}^{2n}
   (\zeta_{4n}^{(2l-1)(2j-1)} + \zeta_{4n}^{(2l-1)(4n-2j+1)}
   + \zeta_{4n}^{(2l-1)} + \zeta_{4n}^{(2l-1)}) =0.
    \end{aligned}
\end{align*}
\end{proof}

\subsection{Discussions for the residues}
Given the discussions in the previous subsection, the consistency of the estimator $ V^{j,j'}_{n, m_n} $
is now reduced to the convergence 
(to zero) of the residue terms 
\begin{align}\label{Fundecomp}
    \begin{aligned}
         & \left(\int_0^1 \int_0^s
        + \int_0^t\int_0^u \right) \mathcal{D}_{m_n}^{j, j'}(u, s)\,     
        {\rm d}X^j_u {\rm d}X^{j'}_s
        \\
         &= M_{m_n}^{j,j'}(1) 
         + M_{m_n}^{j',j}(1) + I_{m_n}^{1,j,j'}(1) + I_{m_n}^{2,j,j'} (1) + I_{m_n}^{3,j,j'} (1) \\
         &\hspace{1cm}+I_{m_n}^{1,j',j}(1) + I_{m_n}^{2,j',j} (1) + I_{m_n}^{3,j',j} (1) 
\end{aligned}
\end{align}
for $ j, j' = 1, 2, \dots, J $, 
where 
\begin{align*}
\begin{aligned}
    M_{m}^{j,k} (t)&:= \int_0^t \left (\int_0^s \mathcal{D}_{m}^{j, k}(s, u) \sum_r \sigma_r^{k}(u) \,{\rm d}W_u^r \right) \sum_r \sigma_r^{j}(s)\,{\rm d}W_s^r,\\
     I_{m}^{1,j,k} (t)
        &=\int_0^t \left (\int_0^s \mathcal{D}_{m}^{j, k}(s,u) b^{j'} (u) \,{\rm d}u \right) \sum_r \sigma_r^{j}(s)\,{\rm d}W_s^r, \\
     I_{m}^{2,j,k} (t)
        &= \int_0^t \left (\int_0^s \mathcal{D}_{m}^{j, k}(s, u) \sum_r \sigma_r^{k}(u)\, {\rm d}W_u^r \right) b^j(s) \, {\rm d}s, 
        \end{aligned}
\end{align*}
and 
\begin{align*}
      I_{m}^{3,j,k} (t)
        &=\int_0^t \left (\int_0^s \mathcal{D}_{m}^{j, k}(s, u) b^{j'}(u)\, {\rm d}u \right)
        b^{j}(s) \,{\rm d}s.
\end{align*}
for $ j, k =1, 2, \cdots, J $,
and $ t \in [0,1] $. 
We assume the following  
\begin{ass}\label{H1}
{\rm (i)} For $p \geq 1 $, it holds that 
    \begin{align}\label{H1-1}
    A_p:= 
     {\bf E}\Big[(\sup_{t\in[0,1]} \sum_j|b^j(t)|^2)^{p/2}\Big] +           {\bf E}\Big[(\sup_{t\in[0,1]}
   \sum_{r,j} |\sigma_r^{j} (t)|^2)^{p/2}\Big]
    < +\infty. 
    \end{align}
{\rm (ii)} Each function $t \mapsto \sigma^j_r(t)$, $ j=1, 2, \dots, J, \, r=1, 2, \dots, d $, is continuous on $[0,1]$ almost surely. 
\end{ass}

\begin{lemma}\label{A1}
Under Assumption {\rm \ref{H1}}, we have, as $ m \to \infty $, 
\begin{align}\label{estI1I3}
   \mathbf{E} [| I_{m}^1 (t) |^2] + \mathbf{E} [| I_{m}^3 (t) |^2] 
   \leq O \left(\int_0^t  \left(\int_0^s |\mathcal{D}_{m}^{j, j'}(s, u) |\,{\rm d}u \right)^2 \mathrm{d}s \right), 
\end{align}
and
\begin{align}\label{estMI2}
   \mathbf{E} [| M_{m} (t) |^2] + \mathbf{E} [| I_{m}^2 (t) |^2] 
   \leq O \left(\int_0^t  \int_0^s |\mathcal{D}_{m}^{j, j'}(s, u) |^2\,{\rm d}u  \mathrm{d}s \right),
\end{align}
where $ O (\cdot) $ is Landau's big $ O $. Here we omit the superscript $ j, j'$ for 
clarity. 
\end{lemma}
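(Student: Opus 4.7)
My plan is to estimate each of the four functionals $ M_m $, $ I_m^1 $, $ I_m^2 $, $ I_m^3 $ separately, with the dichotomy between \eqref{estI1I3} and \eqref{estMI2} determined purely by the nature of the inner integral: if it is with respect to $ dW $, then It\^o's isometry or BDG will produce $ \int_0^s |\mathcal{D}^{j,j'}_m(s,u)|^2\, du $, whereas if it is with respect to $ du $ then pulling out the supremum of the drift coefficient leaves the $ L^1 $ factor $ \big(\int_0^s |\mathcal{D}^{j,j'}_m(s,u)|\, du\big)^2 $. The moment bounds in Assumption \ref{H1} together with Cauchy--Schwarz in $ \mathbf{P} $ will absorb all the random coefficients.

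For $ M_m $ I would first apply It\^o's isometry to the outer stochastic integral to obtain
\begin{equation*}
\mathbf{E}[|M_m(t)|^2] = \int_0^t \mathbf{E}\Big[N_s^2 \sum_r |\sigma^j_r(s)|^2\Big]\, ds, \qquad N_s := \int_0^s \mathcal{D}^{j,k}_m(s,u) \sum_r \sigma^k_r(u)\, dW^r_u.
\end{equation*}
For each fixed $ s $ I would use Cauchy--Schwarz in $ \mathbf{P} $ to decouple $ N_s $ from $ \sigma^j_r(s) $, and then bound $ \mathbf{E}[N_s^4] $ via BDG at the fourth moment by $ C \big(\int_0^s |\mathcal{D}^{j,k}_m(s,u)|^2\, du\big)^2 \cdot \mathbf{E}\big[(\sup_u \sum_r |\sigma^k_r(u)|^2)^2\big] $, which is finite by $ A_4 < \infty $. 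The term $ I_m^2 $ is handled the same way, except that the outer Lebesgue integral is first squared using $ \big(\int_0^t f\,ds\big)^2 \leq t \int_0^t f^2\, ds $ before invoking Fubini and the same BDG estimate for the inner martingale.

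For $ I_m^1 $ and $ I_m^3 $ the inner integral is Lebesgue, so I would immediately extract the supremum $ \sup_u |b^{j'}(u)| $ and reduce the inner factor to $ \int_0^s |\mathcal{D}^{j,k}_m(s,u)|\, du $. For $ I_m^1 $, It\^o's isometry on the outer stochastic integral then yields $ \int_0^t \big(\int_0^s |\mathcal{D}^{j,k}_m(s,u)|\, du\big)^2 \mathbf{E}\big[\sup_u |b^{j'}(u)|^2 \sum_r |\sigma^j_r(s)|^2\big]\, ds $, and Cauchy--Schwarz in $ \mathbf{P} $ together with $ A_4 < \infty $ finishes the estimate. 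For $ I_m^3 $, the outer Lebesgue integral is handled by Cauchy--Schwarz in $ ds $, producing $ |I_m^3(t)|^2 \leq t \int_0^t \big(\int_0^s |\mathcal{D}^{j,k}_m(s,u)|\, du\big)^2 (\sup_u |b^{j'}(u)|)^2 |b^j(s)|^2\, ds $, after which taking expectation and applying $ A_4 $ closes the argument.

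The main (and essentially the only) obstacle is that $ \mathcal{D}^{j,j'}_m(s,u) $ depends on the outer variable $ s $, so $ N_s $ is \emph{not} a martingale in $ s $ and one cannot directly apply BDG to it as a process. Instead the moment of $ N_s $ must be computed for each fixed $ s $ via It\^o's isometry and BDG in the inner variable $ u $ only, and then integrated in $ s $. This forces the extra use of Cauchy--Schwarz in $ \mathbf{P} $, which in turn is why the fourth-moment version of Assumption \ref{H1} is needed (rather than just $ p = 2 $); this is harmless since $ A_p < \infty $ is assumed for all $ p \geq 1 $.
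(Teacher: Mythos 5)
Your proposal is correct and follows essentially the same route as the paper: It\^o's isometry (resp.\ Cauchy--Schwarz in $s$) for the outer stochastic (resp.\ Lebesgue) integral, Cauchy--Schwarz in $\mathbf{P}$ to decouple the inner factor from the coefficients, BDG at the fourth moment applied to the inner stochastic integral for each fixed $s$, and the sup-bounds from $A_4<\infty$ for the drift terms. Your remark that the inner integral is not a martingale in $s$ and must be estimated pointwise in $s$ is exactly the (implicit) structure of the paper's argument.
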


\begin{proof}
    
By It\^o's isometry, we have 
\begin{align*}
    \begin{aligned}
         \mathbf{E} [| I_{m}^1 (t) |^2] 
         & \leq  \mathbf{E} \left[ \int_0^t \sum_r |\sigma^j_r (s) |^2 \left(\int_0^s  |\mathcal{D}_{m}^{j, j'}(s, u) ||b^{j'} (
         u)| \, \mathrm{d}u \right)^2 \mathrm{d}s \right] \\
         & \leq A_4 \left(\int_0^t  \int_0^s |\mathcal{D}_{m}^{j, j'}(s, u) |\,{\rm d}u \mathrm{d}s\right)^2 ,
    \end{aligned}
\end{align*}
and 
\begin{align*}
    \begin{aligned}
         \mathbf{E} [| I_{m}^3 (t) |^2] 
         &\leq \mathbf{E} \left[ \left(\int_0^t |b^{j} (s)|\int_0^s  |\mathcal{D}_{m}^{j, j'}(s, u) ||b^{j'} (
         u)| \, \mathrm{d}u \mathrm{d}s \right)^2 \right] \\
         & \leq A_4 \int_0^t  \left(\int_0^s |\mathcal{D}_{m}^{j, j'}(s, u) |\,{\rm d}u \right)^2 \mathrm{d}s, 
    \end{aligned}
\end{align*}
while with the Schwartz and  Burkh\"older--Davis--Gundy (BDG henceforth) inequality, 
we also obtain 
\begin{align*}
    \begin{aligned}
     & \mathbf{E} [| M_{m} (t) |^2] + \mathbf{E} [| I_{m}^2 (t) |^2] \\
   & \leq \int_0^1  \mathbf{E} \left[ \left(\int_0^s \mathcal{D}_{m}^{j, j'}(s, u)  \sum_r \sigma_r^{j'}(u) \,{\rm d}W_u^r \right)^4\right]^{1/2} \mathbf{E} \left[\left(\sum_r (\sigma_r^{j} (s))^2 + (b^{j} (s))^2 
        \right)^2 \right]^{1/2} \mathrm{d}s  
     \\
     &     \leq  C_{4,\mathrm{BDG}} \int_0^1  \mathbf{E} \left[ \left(\int_0^s (\mathcal{D}_{m_n}^{j, j'}(s, u))^2  \sum_r (\sigma_r^{j'}(u))^2 \,{\rm d}u \right)^2 \right]^{1/2}
     \sqrt{2} A_4^{1/2}  \mathrm{d}s \\ 
      & \leq C_{4,\mathrm{BDG}} \sqrt{2} A_4 \int_0^1  \int_0^s (\mathcal{D}_{m}^{j, j'}(s, u) )^2\,{\rm d}u \mathrm{d}s,
    \end{aligned}
\end{align*}
where $ C_{4,\mathrm{BDG}}$ is the universal constant 
appearing in the BDG inequality. 
\end{proof}

\subsection{Statement and a proof}

\begin{theorem}[Consistency of the estimator]\label{consistency with no micro}
Assume \eqref{rho}, \eqref{MC} 
and \eqref{sampligcond}.
Then, under Assumption {\rm \ref{H1}}, 
for $ j, j'=1, 2, \dots, J $, we have
\begin{align*}
         V_{n,m_n}^{j,j'} 
        &\to 
        \int_0^1 \Sigma^{j,j'} (s) \,\mathrm{d}s
\end{align*}
in probability as $ n \to \infty$.
\end{theorem}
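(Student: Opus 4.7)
The plan is to work from the It\^o decomposition \eqref{ItoF}, which, up to the prefactor $(n+\tfrac12)/n \to 1$, expresses $V^{j,j'}_{n,m_n}$ as the ``diagonal'' integral $\int_0^1 \mathcal{D}^{j,j'}_{m_n}(s,s)\,\Sigma^{j,j'}(s)\,\mathrm{d}s$ plus the residue \eqref{Fundecomp}. I would show that the diagonal integral converges in probability to $\int_0^1 \Sigma^{j,j'}(s)\,\mathrm{d}s$ and that every residue term in \eqref{Fundecomp} vanishes in $L^2(\mathbf{P})$.

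For the diagonal term, I would fix an $\omega$ in the full-measure set on which $s\mapsto \Sigma^{j,j'}(s,\omega)$ is continuous (guaranteed by Assumption \ref{H1}(ii)), and apply \eqref{sampligcond} pathwise with $g=\Sigma^{j,j'}(\cdot,\omega)$ to obtain almost sure convergence. The uniform pointwise bound $|\mathcal{D}^{j,j'}_m(u,v)|\le 2$ (immediate from \eqref{D-exp1} and Cauchy--Schwarz in $l$) combined with $\sup_{s}|\Sigma^{j,j'}(s)| \in L^1(\mathbf{P})$ (which follows from Assumption \ref{H1}(i) with $p=2$) then upgrades this to convergence in probability by dominated convergence.

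For the residue, by \eqref{Fundecomp} and Lemma \ref{A1} it suffices to show
\begin{equation*}
\int_0^1\!\int_0^1 \bigl(\mathcal{D}^{j,j'}_{m_n}(u,s)\bigr)^2\,\mathrm{d}u\,\mathrm{d}s \longrightarrow 0 \qquad(n\to\infty),
\end{equation*}
because the $L^1$-type quantity appearing in \eqref{estI1I3} is dominated by the above via Cauchy--Schwarz, using $(\int_0^s|\mathcal{D}|\,\mathrm{d}u)^2 \le s\int_0^s \mathcal{D}^2\,\mathrm{d}u$. I expect this vanishing to be the main obstacle. The strategy is to invoke dominated convergence on $[0,1]^2$ with constant dominating function $4$: the uniform bound $|\mathcal{D}^{j,j'}_{m_n}|\le 2$ is already in hand, and the pointwise convergence $\mathcal{D}^{j,j'}_{m_n}(u,s)\to 0$ for almost every $(u,s)$ is read off from the closed form \eqref{new Dirichlet1}. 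Indeed, since $|\varphi^j(u)-u|,|\varphi^{j'}(s)-s| \le \rho_n \to 0$ by \eqref{rho}, for any $(u,s)$ with $u \neq s$ and $(u,s)\notin\{(0,0),(1,1)\}$ both denominators in \eqref{new Dirichlet1} are eventually bounded below by a constant depending only on $(u,s)$, so each of the two summands is $O(1/m_n) \to 0$ by \eqref{MC}. Plugging this vanishing back into Lemma \ref{A1} shows that $M_{m_n}^{j,j'}(1)$ and $I^{i,j,j'}_{m_n}(1)$ for $i=1,2,3$, together with their symmetric counterparts obtained by swapping $(j,j')$, all tend to zero in $L^2(\mathbf{P})$, completing the proof.
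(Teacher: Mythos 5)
Your proposal is correct and follows essentially the same route as the paper: the It\^o decomposition \eqref{ItoF}, convergence of the diagonal term via \eqref{sampligcond} together with the a.s.\ continuity of $\Sigma^{j,j'}$ from Assumption \ref{H1}(ii), and vanishing of the residue via Lemma \ref{A1} after showing $\int_0^1\int_0^1(\mathcal{D}^{j,j'}_{m_n})^2\,\mathrm{d}u\,\mathrm{d}s\to 0$ by dominated convergence. If anything, your version is slightly more careful than the paper's, both in reducing the $L^1$-type bound of \eqref{estI1I3} to the $L^2$ quantity by Cauchy--Schwarz and in stating the pointwise convergence of the kernel only off the diagonal and the corners $(0,0),(1,1)$, where it in fact fails.
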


\begin{proof}
The convergence to zero of the second term 
in \eqref{ItoF}
is 
seen by Lemma \ref{A1},
since 
$\mathcal{D}_{m_n}^{j,j'}(s,u) \to 0$ as $m_n \to \infty$ uniformly on every compact subset of $(0,1)^2$, 
and since $\mathcal{D}_{m_n}^{j,j'}(s,u)$ is bounded, the dominated convergence theorem implies 
$$
\int_0^1 \int_0^s (\mathcal{D}_{m_n}^{j,j'}(s,u))^2{\rm d}u \mathrm{d}s \to 0 
$$
as $ n \to \infty$.  
The convergence in $ L^1(P) $
of the first term in \eqref{ItoF}
to $ \int_0^1 \Sigma^{j,j'} (s) \mathrm{d}s $ 
is implied by 
\eqref{sampligcond}
and Assumption \ref{H1} (ii).
\end{proof}

\section{Asymptotic Normality}\label{sec:LTS}

\subsection{Discussions on the scale}

We start with a heuristic argument 
of finding the proper scale $ R_n $
such that 
\begin{align*}
        R_n \left( V_{n,m_n}^{j,j'}
        -\int_0^1 \mathcal{D}_{m_n}^{j, j'}(s, s) \Sigma^{j,j'} (s)  \, {\rm d}s\right),
    \end{align*}
converges stably in law to a
(conditioned) Gaussian variable. 
Looking at the decomposition \eqref{Fundecomp}, we see that 
the quadratic variation (process)
of $ M^{j,j'} $ 
\begin{align}\label{QVofM}
    \begin{aligned}
    =: \int_0^t \Big(\int_0^s 
        \mathcal{D}_{m_n}^{j, j'}(s,u)
         \mathcal{D}_{m_n}^{k, k'}(s,u)
         \Sigma^{j', k'} (u)
  \,{\rm d}u\Big)
         \Sigma^{j,k} (s) 
        \, {\rm d}s
         + \mathrm{Res}_t, \qquad t>0, 
    \end{aligned}
\end{align}
especially the first term, is the main term to control. 

The following is the first key to find the scale.
\begin{proposition}\label{PRSS}
Suppose that $ \rho_n m_n^2 \to 0 $, together with $ m_n \to \infty $ 
as $ n \to \infty $. Then, 
for any $ g \in C[0,1]^2 $, 
\begin{align*}
    \begin{aligned}
        m_n \int_0^1 \int_0^s 
        \mathcal{D}_{m_n}^{j, j'}(s,u)
         \mathcal{D}_{m_n}^{k, k'}(s,u)
        g(s,u)  \, {\rm d}s
  \,{\rm d}u
        \to 
        \int_0^1 g(s,s)
        \, {\rm d}s
    \end{aligned}
\end{align*}
as $ n \to \infty $.
\end{proposition}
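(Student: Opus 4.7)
The plan is to exploit the cosine-series representation $\mathcal{D}_m(u,s) = \frac{2}{m}\sum_{l=1}^m c_l(u) c_l(s)$ with $c_l(x) := \cos((l-\tfrac12)\pi x)$ from \eqref{D-exp1}, first reducing the sampled kernels $\mathcal{D}_m^{j,j'}$ to the canonical $\mathcal{D}_m$ via a perturbation argument, and then carrying out the limit by an approximation-to-identity analysis. For the reduction, the Lipschitz bound used in the proof of Lemma \ref{l21} gives $|\mathcal{D}_m^{j,j'}(u,s) - \mathcal{D}_m(u,s)| \leq 4m\rho_n$, and the uniform bound $|\mathcal{D}_m|, |\mathcal{D}_m^{j,j'}| \leq 2$ (each coming from bounding $m$ cosines by $1$) yields $|\mathcal{D}_m^{j,j'}\mathcal{D}_m^{k,k'} - \mathcal{D}_m^2| \leq Cm\rho_n$. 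Multiplying by $m_n$ and integrating over $\{0 \leq u \leq s \leq 1\}$ produces an error of order $m_n^2\rho_n\|g\|_\infty$, which vanishes under the hypothesis $\rho_n m_n^2 \to 0$, so it suffices to treat the canonical kernel $\mathcal{D}_{m_n}^2$.

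Substituting the cosine expansion into the square and using the orthogonality relation $\int_0^1 c_l(u) c_{l'}(u)\,\mathrm{d}u = \tfrac12 \delta_{l,l'}$, one obtains the key identity
\begin{equation*}
\int_0^1 \mathcal{D}_m(u,s)^2\,\mathrm{d}u = \frac{1}{m}\mathcal{D}_m(s,s), \qquad s \in [0,1],
\end{equation*}
so that $m\mathcal{D}_m(\cdot, s)^2$ forms a family of nonnegative densities in $u$ with total $L^1(\mathrm{d}u)$-mass $\mathcal{D}_m(s,s) \to 1$ (by Lemma \ref{l21}). I then verify that $m\mathcal{D}_m(u,s)^2$ behaves as an approximate identity concentrated at $u = s$: for $s$ in a compact subinterval $(\delta, 1-\delta)$, the denominators of both summands of $\mathcal{D}_m(u,s)$ in \eqref{new Dirichlet1} stay bounded away from zero on $\{|u-s| > \delta\}$, so $|\mathcal{D}_m(u,s)| \leq C_\delta/m$ there, and hence $m\int_{|u-s|>\delta} \mathcal{D}_m^2\,\mathrm{d}u = O(1/m) \to 0$. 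Combined with continuity of $g$, this yields the pointwise limit $m \int_0^1 \mathcal{D}_m(u,s)^2 g(s,u)\,\mathrm{d}u \to g(s,s)$ for $s$ in the interior, which the dominated convergence theorem (with the uniform $L^1$-bound from the key identity) upgrades to an integrated limit in $s$.

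The principal obstacle is handling the one-sided inner integration $\int_0^s$ rather than $\int_0^1$ in $u$: the kernel $\mathcal{D}_m(u,s)$ is \emph{not} symmetric in $u$ about $u = s$, due to the ``boundary'' summand $\frac{1}{2m}\sin(m\pi(u+s))/\sin(\pi(u+s)/2)$ in \eqref{new Dirichlet1}. A change of variable $v = s - u$ recasts $\int_0^s \mathcal{D}_m(u,s)^2 g(s,u)\,\mathrm{d}u$ as an integral against the even half-Dirichlet kernel $K(v) := \frac{1}{2m}\cdot\frac{\sin(m\pi v)}{\sin(\pi v/2)}$ concentrated at $v = 0$, plus a cross term and a ``reflected'' term involving $K(2s - v)$; these latter two must be shown to be $O(1/m^2)$ when integrated, uniformly on compact subsets of $(0,1)$, by using the pointwise bound $|K(2s - v)| \leq C_\delta/m$ valid for $s \in (\delta, 1-\delta)$ and $v \in [0, s]$. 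Boundary effects near $s \in \{0,1\}$, where the boundary summand develops the quasi-logarithmic singularities reflected in \eqref{loge}, are managed by the same $\varepsilon$-trimming device used in the proof of Lemma \ref{l21}.
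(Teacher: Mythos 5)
Your overall strategy --- the perturbation reduction to the canonical kernel under $\rho_n m_n^2\to0$, the orthogonality identity $\int_0^1\mathcal{D}_m(u,s)^2\,\mathrm{d}u=\frac1m\mathcal{D}_m(s,s)$, and the approximate-identity argument --- is essentially the route the paper takes in its appendix proof. The genuine gap is in the last step, precisely where you locate the ``principal obstacle''. After the change of variable $v=s-u$, the main term is $m\int_0^s K(v)^2\, g(s,s-v)\,\mathrm{d}v$; since $mK(\cdot)^2$ is an \emph{even} bump centred at $v=0$ whose total mass over $[-1,1]$ equals $1$ (again by orthogonality, $m\int_{-1}^1 K(v)^2\,\mathrm{d}v=1$), the domain $[0,s]$ captures only \emph{half} of that mass, so this term converges to $\tfrac12 g(s,s)$, not $g(s,s)$. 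Your write-up treats the one-sided integration as though the only new difficulty were the cross and reflected terms involving $K(2s-v)$ --- which are indeed negligible, although the cross term is $O(m^{-3/2})$ by Cauchy--Schwarz (or $O(m^{-2}\log m)$ via the $L^1$ bound on $K$) rather than $O(m^{-2})$, still enough --- and never confronts the loss of half the mass of the diagonal bump. Carried out honestly, your argument yields $\tfrac12\int_0^1 g(s,s)\,\mathrm{d}s$, which does not match the limit displayed in the Proposition.

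The paper sidesteps this by extending $g$ symmetrically across the diagonal and using the symmetry of $\mathcal{D}_m(u,s)^2$ in $(u,s)$ to write the integral over the triangle as $\tfrac12$ times the integral over the square, after which the full-square approximate-identity computation applies and the factor $\tfrac12$ is explicit: the appendix in fact proves convergence to $\tfrac12\int_0^1 g(s,s)\,\mathrm{d}s$, consistent with the exact computation $m\int_0^1\int_0^s\mathcal{D}_m(u,s)^2\,\mathrm{d}u\,\mathrm{d}s=\tfrac12$ for $g\equiv1$, so the constant displayed in the statement of the Proposition itself appears to be off by a factor of $2$. Either way, you must pin the constant down explicitly; as written, your proposal leaves it ambiguous and, read literally, asserts the wrong value for the one-sided main term.
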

A proof will be given in 
section \ref{PrPr} in the Appendices. 
The choice $ R_n = m_n^{1/2} $
is convincing once we establish the following. 
\begin{lemma}\label{lemma3.2}
    Suppose that 
    \begin{align*}
        \limsup_{n \to \infty} \rho_n m_n < \infty. 
    \end{align*}
Then, for $j, j'=1, 2, \dots, J$ and $p>1$, 
there exists a positive constant $C_p>0$,
only depending on the choice of $p>1$, such that
    \begin{equation*}
        \limsup_{m_n,n \to \infty}m_n \sup_{s \in [0,1]}
        \int_0^1\big|\mathcal{D}_{m_n}^{j, j'}(u, s)\big|^p\,{\rm d}u  \leq C_p.
    \end{equation*}
\end{lemma}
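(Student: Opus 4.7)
The plan is to reduce the problem to a standard Dirichlet-kernel $L^p$ bound by first establishing a pointwise envelope for $\mathcal{D}_m^{j,j'}(u,s)$ and then exploiting the piecewise-constancy of $\varphi^j$ on each observation interval $[t^j_{k-1},t^j_k)$.

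First I would show the elementary pointwise estimate: because $|\sin(\pi x/2)|\geq |x|$ for $x\in[-1,1]$ and $|\sin(\pi x/2)|\geq (2-x)$ for $x\in[1,2]$, one has
\begin{equation*}
|\mathcal{D}_m^{j,j'}(u,s)|\;\leq\;C\Bigl[\mu_m\bigl(\varphi^j(u)-\varphi^{j'}(s)\bigr)+\mu_m\bigl(\varphi^j(u)+\varphi^{j'}(s)\bigr)+\mu_m\bigl(2-\varphi^j(u)-\varphi^{j'}(s)\bigr)\Bigr],
\end{equation*}
where $\mu_m(h):=\min(1,1/(m|h|))$. Writing $a^p\lesssim_p$ similarly, it then suffices to prove that for every $v\in[0,1]$,
\begin{equation*}
m\int_0^1 \mu_m\bigl(\varphi^j(u)-v\bigr)^p\,\mathrm{d}u\;\leq\; C_p,
\end{equation*}
uniformly in $v$ (the two other terms being handled in an identical manner with the singularities at $0$ and $2$ treated as an endpoint of the domain of $\varphi^j(u)+\varphi^{j'}(s)\in[0,2]$).

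Next, because $\varphi^j$ is constant on each $[t^j_{k-1},t^j_k)$ taking some value $y_k\in[t^j_{k-1},t^j_k]$, the integral decomposes as
\begin{equation*}
\int_0^1\mu_m\bigl(\varphi^j(u)-v\bigr)^p\,\mathrm{d}u=\sum_{k=1}^{n_j}(t^j_k-t^j_{k-1})\,\mu_m(y_k-v)^p.
\end{equation*}
I would split the sum into a ``far'' part $|y_k-v|\geq\rho_n$ and a ``close'' part $|y_k-v|<\rho_n$. On the far part, for any $u\in[t^j_{k-1},t^j_k]$ one has $|u-v|\leq|y_k-v|+\rho_n\leq 2|y_k-v|$, hence $\mu_m(y_k-v)\leq 2\,\mu_m(u-v)$, so the far contribution is bounded by $2^p\int_0^1\mu_m(u-v)^p\,\mathrm{d}u\leq \tfrac{2^{p+1}p}{(p-1)m}$ by the standard computation $\int_{-1}^{1}\min(1,(m|x|)^{-1})^p\,\mathrm{d}x\leq 2p/((p-1)m)$. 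For the close part, every $[t^j_{k-1},t^j_k]$ involved lies entirely in $[v-2\rho_n,v+2\rho_n]$, so the disjoint intervals have total length at most $4\rho_n$, and since $\mu_m(y_k-v)^p\leq 1$ the close contribution is bounded by $4\rho_n\leq 4C/m$ by the hypothesis $\limsup_n m_n\rho_n<\infty$.

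Combining the two parts gives $m\int_0^1\mu_m(\varphi^j(u)-v)^p\,\mathrm{d}u\leq C_p$, uniformly in $v\in[0,1]$, and the analogous arguments for the sum and for $2$ minus the sum—each of which has at worst two singular points in the relevant domain—complete the proof after taking the supremum in $s$. The main obstacle, and the essential reason the condition $\limsup m_n\rho_n<\infty$ enters, is the close-range contribution: without an upper bound on $\rho_n m_n$ the small-scale mass of the Dirichlet-type kernel would no longer be controlled by the mesh, and the Riemann-sum comparison with the continuous Dirichlet kernel would fail.
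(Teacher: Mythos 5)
Your proof is correct and follows essentially the same route as the paper's: the pointwise envelope $\min(1,1/(m|x|))$ for the Dirichlet-type factors, a near/far split around the singularity, the hypothesis $\limsup_n m_n\rho_n<\infty$ to control the measure of the near region, and the integrability of $|x|^{-p}$ for $p>1$ on the far region. The only cosmetic differences are that you handle the singularity of the sum term at $2$ by an explicit third envelope term where the paper invokes the ($2$-)periodicity of $|D_m|$, and that you compare $|\varphi^j(u)-v|$ with $|u-v|$ multiplicatively on each sampling interval rather than via the additive shift $|u-c|-2a/m_n$ used in the paper.
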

A proof will be given in section \ref{pf3.2}. The following is a direct consequence of Lemma \ref{lemma3.2}, given the estimate of \eqref{estI1I3}. 

\begin{cor}\label{cor3}
Under the same assumptions of 
Lemma {\rm \ref{lemma3.2}}, we have 
    \begin{align*}
m_n^{1/2} \mathbf{E} [|I^{1,j,j'} + I^{3,j,j'}|] \to 0\quad (n \to \infty)
\end{align*}
in probability. 
\end{cor}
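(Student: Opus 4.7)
The plan is to chain the two inputs already available, namely the second-moment bound \eqref{estI1I3} from Lemma \ref{A1} and the $L^p$-type bound from Lemma \ref{lemma3.2} with $p=1$, via a simple Cauchy--Schwarz step. Since the claim is about $\mathbf{E}[|I^{1,j,j'}_{m_n}+I^{3,j,j'}_{m_n}|]$ going to $0$ after multiplication by $m_n^{1/2}$, one should first pass to second moments.

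First I would apply Cauchy--Schwarz, writing
\[
\mathbf{E}\!\left[|I^{1,j,j'}_{m_n}+I^{3,j,j'}_{m_n}|\right]
\leq \bigl(\mathbf{E}[|I^{1,j,j'}_{m_n}+I^{3,j,j'}_{m_n}|^2]\bigr)^{1/2}
\leq \sqrt{2}\,\bigl(\mathbf{E}[|I^{1,j,j'}_{m_n}|^2]+\mathbf{E}[|I^{3,j,j'}_{m_n}|^2]\bigr)^{1/2},
\]
and then invoke \eqref{estI1I3} to get
\[
\mathbf{E}[|I^{1,j,j'}_{m_n}|^2]+\mathbf{E}[|I^{3,j,j'}_{m_n}|^2]
= O\!\left(\int_0^1 \Bigl(\int_0^s |\mathcal{D}^{j,j'}_{m_n}(s,u)|\,\mathrm{d}u\Bigr)^{\!2}\mathrm{d}s\right).
\]

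Next I would estimate the right-hand side by pulling out a uniform-in-$s$ bound on the inner integral. By Lemma \ref{lemma3.2} applied with $p=1$, there is a constant $C_1>0$ with
\[
\sup_{s\in[0,1]} \int_0^1 |\mathcal{D}^{j,j'}_{m_n}(s,u)|\,\mathrm{d}u \leq \frac{C_1+o(1)}{m_n},
\]
valid in the regime $\limsup_n \rho_n m_n<\infty$. Consequently
\[
\int_0^1 \Bigl(\int_0^s |\mathcal{D}^{j,j'}_{m_n}(s,u)|\,\mathrm{d}u\Bigr)^{\!2}\mathrm{d}s \leq \Bigl(\frac{C_1+o(1)}{m_n}\Bigr)^{\!2}= O\!\bigl(m_n^{-2}\bigr).
\]

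Combining the two estimates yields $\mathbf{E}[|I^{1,j,j'}_{m_n}+I^{3,j,j'}_{m_n}|]=O(m_n^{-1})$, so that
\[
m_n^{1/2}\,\mathbf{E}\!\left[|I^{1,j,j'}_{m_n}+I^{3,j,j'}_{m_n}|\right] = O\!\bigl(m_n^{-1/2}\bigr) \to 0
\]
as $n\to\infty$, which also gives the stated convergence in probability. There is no substantive obstacle here beyond choosing the right $p$ in Lemma \ref{lemma3.2}; the only subtle point is making sure one uses the $p=1$ bound (which already gives the sharp $1/m_n$ decay of the inner integral) rather than any cruder estimate, since a weaker bound would lose the $m_n^{1/2}$ factor needed for the central-limit scaling later in the section.
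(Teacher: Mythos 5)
Your overall strategy is the intended one --- pass to second moments by Cauchy--Schwarz, invoke \eqref{estI1I3}, and then control $\int_0^s|\mathcal{D}^{j,j'}_{m_n}(s,u)|\,\mathrm{d}u$ uniformly in $s$ via Lemma \ref{lemma3.2} --- but the key step is not licensed as written. Lemma \ref{lemma3.2} is stated only for $p>1$, and the case $p=1$ genuinely fails: the kernel $D_m(x)=\frac{1}{2m}\frac{\sin m\pi x}{\sin(\pi x/2)}$ has $L^1$-norm of order $\frac{\log m}{m}$ (the Lebesgue-constant phenomenon), not $\frac{1}{m}$, and correspondingly the proof of Lemma \ref{lemma3.2} in the appendix breaks down at $p=1$ because the final integral $\int_1^{\infty}\omega^{-p}\,\mathrm{d}\omega$ diverges there. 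So the displayed claim
\[
\sup_{s\in[0,1]}\int_0^1|\mathcal{D}^{j,j'}_{m_n}(s,u)|\,\mathrm{d}u\leq\frac{C_1+o(1)}{m_n}
\]
is false in general, and the remark at the end of your proposal that the ``$p=1$ bound already gives the sharp $1/m_n$ decay'' is exactly the point at which the argument goes wrong.

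The conclusion survives, but you must give up a little in the exponent and check that what remains still suffices. Fix any $p\in(1,2)$. By H\"older's inequality and Lemma \ref{lemma3.2},
\[
\int_0^s|\mathcal{D}^{j,j'}_{m_n}(s,u)|\,\mathrm{d}u
\leq\Bigl(\int_0^1|\mathcal{D}^{j,j'}_{m_n}(s,u)|^p\,\mathrm{d}u\Bigr)^{1/p}
\leq\Bigl(\frac{C_p+o(1)}{m_n}\Bigr)^{1/p}
\]
uniformly in $s$, whence
\[
m_n\int_0^1\Bigl(\int_0^s|\mathcal{D}^{j,j'}_{m_n}(s,u)|\,\mathrm{d}u\Bigr)^{2}\mathrm{d}s
=O\bigl(m_n^{\,1-2/p}\bigr)\to0
\]
because $2/p>1$. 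Combined with your Cauchy--Schwarz step and \eqref{estI1I3}, this gives $m_n^{1/2}\,\mathbf{E}[|I^{1,j,j'}+I^{3,j,j'}|]=O(m_n^{\,1/2-1/p})\to0$, which is the claim (the quantity being deterministic, convergence in probability is just convergence of real numbers). Alternatively one could prove the $p=1$ bound with the extra $\log m_n$ factor and observe that $m_n(\log m_n/m_n)^2\to0$; either way, the sharp $1/m_n$ decay you assumed is neither available nor needed. Note also that with $p=2$ (plain Cauchy--Schwarz on the inner integral) one only gets boundedness, not convergence to zero, so the choice $p\in(1,2)$ strictly is essential.
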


\subsection{Discussions on the sampling scheme, continued}

The assumption in Proposition \ref{PRSS} is too demanding.
We again follow the strategy of \cite{ClGl} 
instead of proving. We assume the following.
\begin{ass}\label{SSfAN}
    There exist integrable functions $ \gamma^{j,j',k,k'} $
   on $ [0,1]$ such that
    \begin{align}\label{SS-AS}
        &  m_n \int_0^1 \int_0^s 
        \mathcal{D}_{m_n}^{j, j'}(s,u)
         \mathcal{D}_{m_n}^{k, k'}(s,u) \,{\rm d}u\,{\rm d}s
         \to \int_0^t \gamma^{j,j',k,k'} (s) \,{\rm d}s
     \end{align}
as $m_n, n \to \infty$. 
\end{ass}
The condition \eqref{SS-AS} is easier to check 
than the following. 
\begin{lemma}\label{lemma3.3+}
Assume \eqref{rho} and \eqref{SS-AS}. 
Then, for any 
 $ t \in [0, 1] $ and a continuous function $g:[0,1]^2 \to \mathbb{R}$, the following convergences as $ m, n \to \infty$ hold.
    \begin{equation}\label{convfan}
    \begin{aligned}
       &  m_n \int_0^t \int_0^s 
        \mathcal{D}_{m_n}^{j, j'}(s,u)
         \mathcal{D}_{m_n}^{k, k'}(s,u) g(s,u)\,{\rm d}u\,{\rm d}s
         \to \int_0^t \gamma^{j,j',k,k'} (s) g(s,s)\,{\rm d}s.
        \end{aligned}
        \end{equation}
\end{lemma}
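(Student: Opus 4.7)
The plan is to reduce the statement to the diagonal convergence \eqref{SS-AS} via the decomposition
\[
g(s,u) = g(s,s) + \bigl( g(s,u) - g(s,s) \bigr).
\]
The left hand side of \eqref{convfan} then splits into a \emph{main term}, depending on $g$ only through its values on the diagonal, and a \emph{residual term} which inherits the continuity of $g$ and vanishes on $\{u=s\}$. I will handle the former by a weak-$\ast$ argument in $L^\infty[0,1]$ and the latter by uniform continuity of $g$ combined with the pointwise decay of $\mathcal{D}_{m_n}^{j,j'}\mathcal{D}_{m_n}^{k,k'}$ away from the diagonal.

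For the main term, set
\[
F_n(s) := m_n \int_0^s \mathcal{D}_{m_n}^{j,j'}(s,u)\,\mathcal{D}_{m_n}^{k,k'}(s,u)\,\mathrm{d}u.
\]
The Cauchy--Schwarz inequality together with Lemma \ref{lemma3.2} applied with $p=2$ yields uniform boundedness of $(F_n)$ in $L^\infty[0,1]$. Reading \eqref{SS-AS} with $\int_0^t$ on both sides gives $\int_0^t F_n(s)\,\mathrm{d}s \to \int_0^t \gamma^{j,j',k,k'}(s)\,\mathrm{d}s$ for every $t \in [0,1]$. Since the indicators $\mathbf{1}_{[0,t]}$ span a dense subspace of $L^1[0,1]$ and $\|F_n\|_\infty$ is uniformly bounded, a standard density argument upgrades this to $\int_0^1 h(s)\,F_n(s)\,\mathrm{d}s \to \int_0^1 h(s)\,\gamma^{j,j',k,k'}(s)\,\mathrm{d}s$ for every $h \in L^1[0,1]$. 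Plugging in $h(s) = \mathbf{1}_{[0,t]}(s)\,g(s,s)$ produces the desired limit $\int_0^t \gamma^{j,j',k,k'}(s)\,g(s,s)\,\mathrm{d}s$ for the main term.

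For the residual, fix $\varepsilon > 0$ and, by uniform continuity of $g$ on $[0,1]^2$, choose $\delta > 0$ with $|g(s,u) - g(s,s)| < \varepsilon$ whenever $|u-s| < \delta$. Split the inner integration at $|u-s| = \delta$. The near-diagonal contribution is bounded pointwise by $\varepsilon\,|\mathcal{D}_{m_n}^{j,j'}\mathcal{D}_{m_n}^{k,k'}|$, so Cauchy--Schwarz together with Lemma \ref{lemma3.2} bounds it by $C\varepsilon$. For the far contribution, note that under \eqref{rho}, for all $n$ sufficiently large and all $(s,u)$ with $|u-s| \geq \delta$ one has $|\varphi^j(u) - \varphi^{j'}(s)| \geq \delta/2$ and $\varphi^j(u) + \varphi^{j'}(s) \in [\delta/2,\, 2 - \delta/2]$; hence both denominators in \eqref{new Dirichlet1} are bounded below by a constant depending only on $\delta$, and $|\mathcal{D}_{m_n}^{j,j'}(s,u)| \leq C(\delta)/m_n$ uniformly on this region. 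The far contribution is therefore $O(1/m_n) \to 0$. Letting $n \to \infty$ and then $\varepsilon \to 0$ closes the argument.

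The main obstacle is that $\mathcal{D}_{m_n}^{j,j'}\mathcal{D}_{m_n}^{k,k'}$ is neither positive nor a genuine summability kernel (its integral is not normalized and it changes sign), so the classical approximate-identity machinery for continuous test functions does not apply directly. The substitute is the $L^2$-control from Lemma \ref{lemma3.2}, which, via Cauchy--Schwarz, simultaneously provides the uniform $L^\infty$-bound that drives the weak-$\ast$ argument on the main term and tames the near-diagonal contribution on the residual.
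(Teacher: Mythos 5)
Your proof is correct and rests on the same core mechanism as the paper's: the product $\mathcal{D}_{m_n}^{j,j'}\mathcal{D}_{m_n}^{k,k'}$ is $O(m_n^{-2})$ pointwise away from the diagonal (your lower bound on the two denominators in \eqref{new Dirichlet1} for $|u-s|\ge\delta$, valid for large $n$ under \eqref{rho}, is exactly the paper's estimate \eqref{keylm3+}), so the mass of $m_n\mathcal{D}\mathcal{D}$ localizes to $\{u=s\}$, where $g(s,u)\approx g(s,s)$ and \eqref{SS-AS} takes over. The difference is one of completeness rather than of route: the paper declares the passage from \eqref{keylm3+} to \eqref{convfan} ``immediate,'' whereas you make explicit the two steps that this hides, namely (i) the weak-$\ast$/density argument needed to upgrade \eqref{SS-AS}, which only tests against indicators $\mathbf{1}_{[0,t]}$, to the continuous weight $s\mapsto g(s,s)$, and (ii) the uniform bounds $\sup_n\|F_n\|_\infty<\infty$ and $m_n\iint|\mathcal{D}^{j,j'}_{m_n}\mathcal{D}^{k,k'}_{m_n}|\le C$ that control the near-diagonal error, both obtained from Lemma \ref{lemma3.2} with $p=2$ via Cauchy--Schwarz. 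These bounds are genuinely needed, since \eqref{SS-AS} involves no absolute values and so controls neither quantity by itself; your write-up is therefore the more honest one. The only caveat worth flagging: Lemma \ref{lemma3.2} assumes $\limsup_n\rho_n m_n<\infty$, which is not among the stated hypotheses \eqref{rho} and \eqref{SS-AS} of this lemma. It does hold in the regime \eqref{H4} in which the lemma is actually applied, and the paper's own ``immediate'' step would require it just as much, but you should state explicitly that you are importing that assumption.
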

\begin{proof}
For $j, j'=1, 2, \dots, J$,  $s \in [0,1)$ and $\varepsilon>0$, it holds that
\begin{equation}\label{keylm3+}
m_n \int_0^{s- \varepsilon} 
   | \mathcal{D}_{m_n}^{j, j'}(u, s)
     \mathcal{D}_{m_n}^{k, k'}(u, s)|\,{\rm d}u 
        \to 0 \quad  \text{as }m_n, n \to \infty.
\end{equation}
By the expression \eqref{new Dirichlet1}, 
we see that, for sufficiently large $ n $ and $u<s-\varepsilon$, it holds that
$ |\mathcal{D}_{m_n}^{j,j'} (u,s)| \leq C_\varepsilon m_n^{-1} $, where the constant $ C_\varepsilon $ only depends on $ \varepsilon $, from which the statement 
\eqref{keylm3+} immediately follows. 
That \eqref{keylm3+} implies \eqref{convfan} is also immediate. 
\end{proof}

Our strategy with Assumption \ref{SSfAN}
might be justified by a convincing example. 
Let us consider the case where 
\begin{align*}
   \Pi = \left\{ \left(\frac{k}{n}, \frac{k}{n}, \cdots, \frac{k}{n} \right) \in [0,1]^J:  k=0,1,\dots, n \right\},
\end{align*}
and for all $ j $, $ \varphi^j \equiv \varphi $ for some $ \varphi $,
that is, a synchronous sampling case. 
In this case, 
\begin{align*}
\begin{aligned}
   & \int_0^t \int_0^s  \mathcal{D}_{m_n}^{i, i'}(u, s)
        \mathcal{D}_{m_n}^{j, j'}(u, s)\, {\rm d}u\,{\rm d}s  \\
    &= \int_0^{[nt]/n} \left(\int_0^{[ns]/n} \left|\mathcal{D}_{m_n}\left( \varphi (u), \varphi(s) \right) \right|^2  {\rm d}u + \left(s- \frac{[ns]}{s}\right)\left|\mathcal{D}_{m_n}\left(\varphi (s), \varphi(s)\right) \right|^2\right)\,{\rm d}s  \\
    &+  \left(t- \frac{[nt]}{t}\right)\left(\int_0^{[nt]/n} \left|\mathcal{D}_{m_n} \left(\varphi (u), \varphi(t)\right) \right|^2  {\rm d}u\right)+ \left|\mathcal{D}_{m_n} \left(\varphi (t), \varphi(t)\right)\right|^2 \int_{[nt]/n}^t \left(s- \frac{[nt]}{n}\right)\,{\rm d}s.
\end{aligned} 
\end{align*}

\if1
\begin{ex}
We first consider the example of sampling studied in \cite{ClGl}. Let 
\begin{align*}
    \varphi \left( \left[\frac{k-1}{n},\frac{k}{n} \right) \right)
    = \frac{k-1}{n}, \quad k=1, \cdots, n,
\end{align*}
and 
\begin{align*}
     \rho_n m_n = \frac{m_n}{n} = a \in \mathbf{N}.
\end{align*}
Then, since
\begin{align*}
    \begin{aligned}
       & \mathcal{D}_{m_n} \left(\frac{k}{n}, \frac{k'}{n} \right)
        =  \mathcal{D}_{m_n} \left(\frac{ak}{m_n}, \frac{a k'}{m_n} \right)
        = \frac{1}{2m}\frac{\sin{2a\pi\left(k+k')\right)}}{\sin{\pi\left(k+k'\right)/(2n)}} + \frac{1}{2m}\frac{\sin{2a\pi\left(k-k'\right)}}{\sin{\pi\left(k-k'\right)/(2n)}} \\
    &= \begin{cases}
        0 & k\ne k', \\
        1& k = k'
    \end{cases},
    \end{aligned}
\end{align*}
we have
\begin{align*}
    \begin{aligned}
        & \int_0^t \int_0^s  \mathcal{D}_{m_n}^{i, i'}(u, s)
        \mathcal{D}_{m_n}^{j, j'}(u, s)\, {\rm d}u\,{\rm d}s  = 
             \frac{t}{2n}.
    \end{aligned}
\end{align*}
Thus, this example satisfy Assumption \ref{SSfAN} with $ \gamma (s) \equiv a/2 $.
\end{ex}
\fi
\begin{ex}
    Let us consider the case \eqref{KSSS};
\begin{align*}
    \varphi^j \left( \left[\frac{k-1}{n}, \frac{k}{n}\right) \right) = \frac{2k-1}{2n+1},
\end{align*}
and 
\begin{align*}
    \frac{2m}{2n+1} = a \in \mathbf{N}. 
\end{align*}
In this case, since
\begin{align*}
    \begin{aligned}
       & \mathcal{D}_{m_n} \left(\frac{2k-1}{2n+1}, \frac{2k'-1}{2n+1} \right)
        =  \mathcal{D}_{m_n} \left(\frac{a(2k-1)}{2m_n}, \frac{a (2k'-1)}{2m_n} \right) \\
       & = \frac{1}{2m}\frac{\sin{2a\pi\left(k+k'-1)\right)}}{\sin{\pi(k+k'-1)/(2n+1)}} + \frac{1}{2m}\frac{\sin{2a\pi\left(k-k'\right)}}{\sin{\pi\left(k-k'\right)/(2n+1)}} 
    = \begin{cases}
        0 & k\ne k' \\
        1 & k=k', 
    \end{cases}
    \end{aligned}
\end{align*}
we have 
\begin{align*}
    \begin{aligned}
        & \int_0^t \int_0^s  \mathcal{D}_{m_n}^{i, i'}(u, s)
        \mathcal{D}_{m_n}^{j, j'}(u, s)\, {\rm d}u\,{\rm d}s  =  \frac{t}{2n}.
    \end{aligned}
\end{align*}
Thus, it satisfies Assumption {\rm \ref{SSfAN}}
with $ \gamma (s) \equiv a/2 $.
\end{ex}

\subsection{More on the estimates on the residues; we may need a bit of Malliavin calculus}

Contrary the case of $ I^1 $ and $ I^3 $, 
the combination of 
the estimate \eqref{estMI2} and Lemma \ref{lemma3.2} 
is insufficient to prove the convergence $ m_n^{1/2} \mathbf{E} [|I^2|] \to 0 $. 
Instead of the standard ``BDG approach" taken in the proof of Lemma \ref{A1}, we resort to a bit of Malliavin calculus, its integration by parts (IBP for short) formula to be precise,
which is the approach taken in \cite{ClGl}\footnote{To be precise, they used the IBP technique to prove the convergence of $ \mathrm{Res}_t $ in \eqref{QVofM} and $ \langle M, W\rangle $, but used another approach to prove the convergence of $ I^2 $ for which the Malliavin differentiability for $ b $ is not required.}. 
Specifically, to estimate $ \mathbf{E}[|I^2_{m_n}|^2] $, we use the IBP instead of Schwartz inequality to get 
\begin{align}\label{IBP-1}
    \begin{aligned}
        (\mathbf{E}[|I^{2,j,k}_{m_n}|^2]
      &=) \int_{[0,1]^2}
        \mathbf{E} \left[\left (\int_0^s \mathcal{D}_{m_n}^{j, k}(s, u) \sum_r \sigma_r^{k}(u)\, {\rm d}W_u^r \right) 
        L_{m_n}^{j,k}(s',s')
        b^j(s) b^j(s') \right]\mathrm{d}s 
        \mathrm{d}s' \\
&= \int_{[0,1]^2}
        \mathbf{E} \left[ \int_0^s \mathcal{D}_{m_n}^{j, k}(s, u) \sum_r \sigma_r^{k}(u)\, 
      \nabla_{u,r} (  L_{m_n}^{j,k}(s',s')
        b^j(s) b^j(s') ) \mathrm{d}u \right]\mathrm{d}s 
        \mathrm{d}s',
    \end{aligned}
\end{align}
where 
\begin{align}
        L_{m_n}^{j,j'}(t,s):=\int_0^s \mathcal{D}_{m_n}^{j, j'}(t, u)
       \sum_r \sigma_r^{j'}(u) \,{\rm d}W_u^r, \qquad s,t \in [0, 1],
\end{align}
and $ \nabla_{u,r} $ denotes\footnote{Here we avoid using the commonly used notation $ D $ for the derivative so as not to mix it up with the Dirichlet kernel.}
the Malliavin--Shigekawa derivative 
in the direction of ``$ \mathrm{d} W_u^r $".
The merit of the expression in the right-hand-side of \eqref{IBP-1} is that we obtain an estimate with $ \int |\mathcal{D}| $
instead of $ \int |\mathcal{D}|^2 $, 
though we need to assume further some differentiability and integrability of $ b $ and $ \sigma $\footnote{As is remarked in \cite{ClGl}, the  conditions \eqref{H3} and \eqref{H3b} are not too strong; for example 
the solution for a stochastic differential equation with  smooth bounded coefficients naturally satisfy these conditions.};
\begin{ass}\label{MalDer}
{\rm (i)} For any $p>1$, 
$\sigma^j_r \in \mathbb{D}^{1,p}$, $j=1, 2, \dots, J$, 
$r=1, 2, \dots, d$, 
and it holds that
    \begin{equation}\label{H3}
         {\bf E}\Big[
         (\sup_{s,t\in[0,1]}\sum_{r,r',j}| \nabla_{s,r'} \sigma_r^j(t)|^2)^{p/2} \Big]  < +\infty,
    \end{equation}
where $\mathbb{D}^{1, p}$ stands for the domain of the Malliavin derivative $ \nabla $ in $L^p(\Omega)$ 
{\rm (}see {\rm \cite{ND}} for details{\rm )}.

\vspace{1mm}
\noindent
{\rm (ii)} For any $p>1$, 
$b^j \in \mathbb{D}^{1,p}$, and $j=1, 2, \dots, J$, 
it holds that
    \begin{equation}\label{H3b}
         {\bf E}\Big[
    (\sup_{s,t\in[0,1]}\sum_{r,j}| \nabla_{s,r} b^j(t)|^2)^{p/2} \Big]  < +\infty.
    \end{equation}
\end{ass}
\begin{lemma}\label{cor4}
Under Assumptions {\rm \ref{H1}} and {\rm \ref{MalDer}}, we have
\begin{align*}
    m_n \mathbf{E} [| I_{m_n}^2 (t) |^2] \to 0 \quad (m_n,n \to \infty). 
\end{align*}
\end{lemma}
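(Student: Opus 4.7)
Starting from the Malliavin integration-by-parts identity \eqref{IBP-1}, the plan is to develop $\nabla_{u,r}(L_{m_n}^{j,k}(s',s')\,b^j(s)\,b^j(s'))$ by the product rule and to bound each of the resulting pieces separately. Combining the product rule with the chain rule for the Malliavin derivative of the It\^o integral $L_{m_n}^{j,k}(s',s')$ gives
\[
\nabla_{u,r} L_{m_n}^{j,k}(s',s') = \mathbf{1}_{\{u\leq s'\}}\mathcal{D}_{m_n}^{j,k}(s',u)\sigma_r^{k}(u) + \sum_{r'}\int_0^{s'}\!\mathcal{D}_{m_n}^{j,k}(s',u')\,\nabla_{u,r}\sigma_{r'}^{k}(u')\,{\rm d}W_{u'}^{r'},
\]
so that $\mathbf{E}[|I^{2,j,k}_{m_n}|^2]$ decomposes as $T_1+T_2+T_3+T_4$, where $T_1$ and $T_2$ collect the contributions of the two summands above, while $T_3,T_4$ correspond to $\nabla_{u,r} b^j(s)$ and $\nabla_{u,r} b^j(s')$ respectively.

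The key ingredient is Lemma \ref{lemma3.2} with $p=1$: by the symmetry $\mathcal{D}_{m_n}^{j,k}(u,s)=\mathcal{D}_{m_n}^{k,j}(s,u)$, which is immediate from \eqref{new Dirichlet1}, one has $\int_0^1|\mathcal{D}_{m_n}^{j,k}(u,s)|\,{\rm d}u\leq C/m_n$ uniformly in $s$ and also $\int_0^1|\mathcal{D}_{m_n}^{j,k}(u,s)|\,{\rm d}s\leq C/m_n$ uniformly in $u$. For the leading term $T_1$, substituting the local piece and then using Fubini produces
\[
|T_1|\leq C\int_{[0,1]^2}\!\!\int_0^{s\wedge s'}\!|\mathcal{D}_{m_n}^{j,k}(s,u)|\,|\mathcal{D}_{m_n}^{j,k}(s',u)|\,{\rm d}u\,{\rm d}s\,{\rm d}s' = C\int_0^1\!\!\left(\int_u^1\!|\mathcal{D}_{m_n}^{j,k}(s,u)|\,{\rm d}s\right)^{\!2}{\rm d}u\leq C/m_n^2,
\]
where the constant $C$ absorbs $\mathbf{E}[\sup_u|\Sigma^{k,k}(u)|(\sup_s|b^j(s)|)^2]$, which is finite by Assumption \ref{H1} and H\"older.

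Each of $T_2,T_3,T_4$ carries an extra stochastic-integral factor whose $L^2$-norm is at most $C/\sqrt{m_n}$: for $T_2$ this is $\sum_{r'}\int_0^{s'}\mathcal{D}_{m_n}^{j,k}(s',u')\nabla_{u,r}\sigma_{r'}^{k}(u')\,{\rm d}W_{u'}^{r'}$, controlled by It\^o's isometry, Lemma \ref{lemma3.2} with $p=2$ and Assumption \ref{MalDer}(i); for $T_3,T_4$ it is $L_{m_n}^{j,k}(s',s')$ itself, with the same argument applied to $\sigma_r^k$ rather than to $\nabla\sigma_r^k$. Cauchy--Schwarz against this factor, followed by another application of the $L^1$ kernel bound, gives $|T_2|,|T_3|,|T_4|\leq C/m_n^{3/2}$ (for $T_3,T_4$ one additionally invokes Assumption \ref{MalDer}(ii) to bound the moments of $\nabla b^j$). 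Summing the four estimates, $\mathbf{E}[|I^{2,j,k}_{m_n}|^2]\leq C/m_n^2+C/m_n^{3/2}$, hence $m_n\,\mathbf{E}[|I^{2,j,k}_{m_n}|^2]\to 0$.

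The main obstacle is not any single hard step but careful bookkeeping of the various $L^p$ moments of $b^j,\sigma^j_r,\nabla b^j,\nabla\sigma^j_r$ appearing in H\"older/Cauchy--Schwarz, and checking that each is finite under Assumptions \ref{H1} and \ref{MalDer}. It is worth noting that the Malliavin approach is essential here: the naive BDG estimate \eqref{estMI2} only yields $\mathbf{E}[|I^{2,j,k}_{m_n}|^2]=O(1/m_n)$, which is insufficient, whereas the IBP formula effectively trades one factor of $\int|\mathcal{D}|^2\sim 1/m_n$ for two factors of $\int|\mathcal{D}|\sim 1/m_n$, producing the needed $1/m_n^2$.
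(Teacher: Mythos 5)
Your proof is correct and follows essentially the same route as the paper: the same IBP identity \eqref{IBP-1}, the same product-rule expansion of the Malliavin derivative of $L_{m_n}^{j,k}(s',s')\,b^j(s)\,b^j(s')$, and the same combination of Fubini with the $L^1$ and $L^2$ kernel bounds of Lemma \ref{lemma3.2}, giving $O(m_n^{-2})+O(m_n^{-3/2})$ exactly as in the paper (which merely groups your $T_2,T_3,T_4$ into a single second term). One small caveat: Lemma \ref{lemma3.2} is stated only for $p>1$, and its proof yields an extra $\log m_n$ factor at $p=1$, so your bounds should read $O((\log m_n)^2\,m_n^{-2})$ and $O(\log m_n\, m_n^{-3/2})$ --- still $o(m_n^{-1})$, so the conclusion is unaffected.
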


\begin{proof}
We start with \eqref{IBP-1}. 
We can go further;
\begin{align*}
    \begin{aligned}
      \mathbf{E}[|I^{2,j,k}_{m_n}|^2] = \int_{[0,1]^2}
 \int_0^s \mathcal{D}_{m_n}^{j, k}(s, u) 
\mathbf{E}\left[ \sum_r
\sigma_r^{k}(u)\Phi_r(u)b^j(s) b^j(s') \right] 
\mathrm{d}u \mathrm{d}s 
        \mathrm{d}s',
    \end{aligned}
\end{align*}
where 
\begin{align*}
    \begin{aligned}
        & \Phi_r(u) \\
        &:= 
1_{\{u\leq s'\}} \left( \mathcal{D}_{m_n}^{j, k}(s', u) \sigma^k_r(u) 
+ \int_0^{s'}\mathcal{D}_{m_n}^{j, k}(s', u') \sum_{r'} \nabla_{u,r} \sigma^k_{r'} (u') \mathrm{d}W^r_{u'} \right)+ L_{m_n}^{j,k}(s',s')\nabla_{u,r}.
    \end{aligned}
\end{align*}
Since it holds that for 
\begin{align*}
    \begin{aligned}
        & (\mathbf{E}[|L_{m_n}^{j,k}(s',s')|^p])^{2/p} +
        \left(\mathbf{E}[| \int_0^{s'}\mathcal{D}_{m_n}^{j, k}(s', u') \sum_{r'} \nabla_{u,r} \sigma^k_{r'} (u') \mathrm{d}W^r_{u'}|^p] \right)^{2/p} \\ 
      & \hspace{2cm} = O \left( \int_0^s |\mathcal{D}_{m_n}^{j, k}(s, u)|^2 \mathrm{d}u
        \right), \qquad p>1,
    \end{aligned}
\end{align*}
by BDG and Assumptions \ref{H1} and \ref{MalDer}, 
we have 
\begin{align*}
    \begin{aligned}
    \mathbf{E}[|I^{2,j,k}_{m_n}|^2]
    &= O\bigg( \int_{[0,1]^2}\int_0^{s\wedge s'}
    |\mathcal{D}_{m_n}^{j, k}(s, u) |
    |\mathcal{D}_{m_n}^{j, k}(s', u)|
    \mathrm{d}u \mathrm{d}s 
        \mathrm{d}s'\\
        &+ \int_{[0,1]^2}\int_0^{s}
    |\mathcal{D}_{m_n}^{j, k}(s, u) |
    \mathrm{d}u 
   \left( \int_0^{s'} |\mathcal{D}_{m_n}^{j, k}(s', u')|^2
    \mathrm{d}u'\right)^{1/2} \mathrm{d}s\bigg),
    \end{aligned}
\end{align*}
which is seen to be $ o (m_n) $
by Lemma \ref{lemma3.2}.
\end{proof}

\subsection{Statement and a proof}

The error distribution is 
obtained as
\begin{theorem}[Asymptotic normality]\label{CLT with no micro}
Under Assumptions {\rm \ref{H1}}, {\rm \ref{SSfAN}}, and {\rm \ref{MalDer}}, for $j, j'=1, 2, \dots, J$,
the sequence of random variables
    \begin{align*}
        m_n^{1/2}\left( V_{n,m_n}^{j,j'}
        -\int_0^1 \mathcal{D}_{m_n}^{j, j'}(s, s) \Sigma^{j,j'} (s)  \, {\rm d}s\right),
    \end{align*}
converges to 
    $$
    \begin{aligned}
        \int_0^1 
        \sqrt{(\gamma^{j,j',j,j'} (s) + \gamma^{j',j,j',j}
        \Sigma^{j,j}(s)  \Sigma^{j',j'}(s) 
       + 2 \gamma^{j,j',j',j}(s) (\Sigma^{j,j'} (s))^2 }
\, {\rm d}B_s
    \end{aligned}
    $$
    stably in law as $m_n,n \to \infty$,
    where $B=(B_t)_{t \in [0.\, 1]}$ is a one-dimensional Brownian motion independent of $(W_t)_{t \in [0, 1]}$.
\end{theorem}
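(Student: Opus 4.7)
Starting from the It\^o expansion \eqref{ItoF} combined with the residue decomposition \eqref{Fundecomp}, multiplication by $m_n^{1/2}$ yields
\begin{align*}
m_n^{1/2}\!\left(V_{n,m_n}^{j,j'} - \int_0^1\!\mathcal{D}_{m_n}^{j,j'}(s,s)\Sigma^{j,j'}(s)\,\mathrm{d}s\right)
&= \tfrac{n}{n+1/2}\,N_n(1) \\
&\quad + m_n^{1/2}\sum_{l=1}^{3}\bigl(I_{m_n}^{l,j,j'}(1)+I_{m_n}^{l,j',j}(1)\bigr) + \varepsilon_n,
\end{align*}
where $N_n(t):=m_n^{1/2}\bigl(M_{m_n}^{j,j'}(t)+M_{m_n}^{j',j}(t)\bigr)$ and $\varepsilon_n=o_{\mathbf{P}}(1)$ because $n/(n+1/2)-1=O(1/n)$ while the piece $\int\!\mathcal{D}_{m_n}^{j,j'}\Sigma^{j,j'}\,\mathrm{d}s$ is bounded in probability. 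Corollary \ref{cor3} kills the $I^{1}$ and $I^{3}$ contributions and Lemma \ref{cor4} kills the $I^{2}$ contributions, so the proof reduces to the stable convergence in law of the continuous martingale $N_n$.

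The plan is to invoke a stable CLT for continuous local martingales of Jacod--Shiryaev type: it suffices to verify \textbf{(a)} $\langle N_n\rangle_t \to \int_0^t V(s)\,\mathrm{d}s$ in probability for each $t\in[0,1]$, where
$V(s):=(\gamma^{j,j',j,j'}(s)+\gamma^{j',j,j',j}(s))\Sigma^{j,j}(s)\Sigma^{j',j'}(s)+2\gamma^{j,j',j',j}(s)(\Sigma^{j,j'}(s))^2$, and \textbf{(b)} $\langle N_n,W^r\rangle_t\to 0$ in probability for each $r=1,\dots,d$ and each $t$. For \textbf{(a)}, expand $\langle N_n\rangle_t = m_n\bigl(\langle M^{j,j'}\rangle_t+2\langle M^{j,j'},M^{j',j}\rangle_t+\langle M^{j',j}\rangle_t\bigr)$; each cross-bracket has the form $\int_0^t L_{m_n}^{a,b}(s,s)L_{m_n}^{c,d}(s,s)\Sigma^{a,c}(s)\,\mathrm{d}s$ for appropriate indices $(a,b,c,d)$. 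Applying the conditional It\^o isometry to $L\cdot L$, the dominant piece equals $m_n\int_0^t\!\int_0^s\! \mathcal{D}_{m_n}^{a,b}(s,u)\mathcal{D}_{m_n}^{c,d}(s,u)\Sigma^{b,d}(u)\Sigma^{a,c}(s)\,\mathrm{d}u\,\mathrm{d}s$, which tends to $\int_0^t \gamma^{a,b,c,d}(s)\Sigma^{b,d}(s)\Sigma^{a,c}(s)\,\mathrm{d}s$ by Lemma \ref{lemma3.3+} (after a routine localisation handling the random continuous weight, justified by Assumption \ref{H1}(ii)). Summing the three contributions produces $\int_0^t V(s)\,\mathrm{d}s$. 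For \textbf{(b)}, $\langle N_n,W^r\rangle_t=m_n^{1/2}\int_0^t\bigl[L_{m_n}^{j,j'}(s,s)\sigma^j_r(s)+L_{m_n}^{j',j}(s,s)\sigma^{j'}_r(s)\bigr]\mathrm{d}s$ and a stochastic Fubini turns this into a stochastic integral whose bracket is bounded by $m_n\int_0^1\bigl(\int_0^1|\mathcal{D}_{m_n}^{j,j'}(u,v)|\,\mathrm{d}u\bigr)^2|\sigma|^2\,\mathrm{d}v = O(1/m_n)$ thanks to Lemma \ref{lemma3.2} with $p=1$.

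The main obstacle will be the martingale remainder in \textbf{(a)}, i.e.\ $L_{m_n}^{a,b}(s,s)L_{m_n}^{c,d}(s,s) - \int_0^s\!\mathcal{D}_{m_n}^{a,b}\mathcal{D}_{m_n}^{c,d}\Sigma^{b,d}\,\mathrm{d}u$, which must vanish after being integrated against $\Sigma^{a,c}(s)\,\mathrm{d}s$ and scaled by $m_n$. A naive BDG estimate only yields $m_n\int|\mathcal{D}|^2\cdot\int|\mathcal{D}|^2 = O(1)$, which is insufficient; the remedy, as already exploited in Lemma \ref{cor4}, is a Malliavin integration-by-parts on one of the two $L$-factors, trading an $\int|\mathcal{D}|^2$ for $\int|\mathcal{D}|$ and thereby gaining an extra $1/m_n$ via Lemma \ref{lemma3.2}. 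Once \textbf{(a)} and \textbf{(b)} are verified, the stable CLT represents the limit of $N_n(1)$ as an $\mathcal{F}$-conditionally centered Gaussian random variable with conditional variance $\int_0^1 V(s)\,\mathrm{d}s$, realised on a standard extension of $(\Omega,\mathcal{F},\mathbf{P})$ as $\int_0^1\sqrt{V(s)}\,\mathrm{d}B_s$ for a one-dimensional Brownian motion $B$ independent of $\mathbf{W}$ --- exactly the stated limit.
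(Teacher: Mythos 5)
Your proposal is correct and follows essentially the same route as the paper, which likewise reduces the problem (after disposing of $I^1,I^3$ via Corollary \ref{cor3} and of $I^2$ via Lemma \ref{cor4}) to checking convergence of the quadratic variation of $m_n^{1/2}(M^{j,j'}_{m_n}+M^{j',j}_{m_n})$ and the vanishing of its bracket against $W^r$, then invokes Jacod's stable CLT and defers the remaining computations to Cl\'ement--Gloter; your IBP treatment of the quadratic-variation remainder and the resulting variance $V(s)$ match the paper's intent. One small repair: Lemma \ref{lemma3.2} is stated, and its proof works, only for $p>1$ (at $p=1$ the integral $\int_1^\infty \omega^{-p}\,\mathrm{d}\omega$ in \eqref{bound4} diverges and the bound degrades to $O(\log m_n/m_n)$), but your estimate for $\langle N_n,W^r\rangle_t$ survives either with that logarithmic loss, since $m_n(\log m_n/m_n)^2\to 0$, or by applying H\"older with any exponent $p\in(1,2)$.
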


\begin{proof}

Given Corollary \ref{cor3}, Lemma \ref{cor4}, and Assumption \ref{SSfAN}, 
it suffices to show that 
$ m_n \mathbf{E}[ |\mathrm{Res}_t |] \to 0 $
and 
    \begin{align*}
        &{\bf E}\Big[\langle m_n^{1/2} M^{j,j'}_{m_n},W^r \rangle^2_t\Big]\nonumber\\
&=m_n \int_{[0,t]^2} {\bf E}
        \Big[L_{m_n}^{j,j'}(s,s)L_{m_n}^{j,j'}(s',s')\sigma_r^{j}(s)\sigma_r^{j}(s')\Big]
        {\rm d}s \,{\rm d}s' \to 0 \label{new notaion}
    \end{align*}
as $m_n, n \to \infty $ (by Jacod's theorem \cite{JJ},
see also \cite{JP}), 
but totally the same 
proof as the one in \cite{ClGl}
works, and so we omit it. 
\end{proof}


\color{black}

\appendix

\section{Appendices}

\if0
\begin{lemma}\label{lemma3.2}
For $j, j'=1, 2, \dots, J$ and $p>1$, 
there exists a positive constant $C_p>0$,
only depending on the choice of $p>1$, such that
    \begin{equation*}
        \limsup_{m_n,n \to \infty}m_n \sup_{s \in [0,1]}
        \int_0^1\big|\mathcal{D}_{m_n}^{j, j'}(u, s)\big|^p\,{\rm d}u  \leq C_p.
    \end{equation*}
\end{lemma}
\fi
\subsection{A proof of Lemma \ref{lemma3.2}}\label{pf3.2}
Let 
\begin{align*}
    D_m (x) := \frac{1}{2m} \frac{\sin m \pi x}{\sin \frac{\pi}{2}x}.
\end{align*}
By extending $ \varphi^j $ and $ \varphi^{j'} $ periodically, $ \mathcal{D}^{j,j'}_m (u,s) = D_m(\varphi^j(u) + \varphi^{j'} (s)) + D_m(\varphi^j(u) - \varphi^{j'}(s)) $ is periodic in both $ u $ and $ s $ with the period $ 2 $, 
and therefore 
\begin{align*}
& \sup_{s \in [0,1]}
        \int_0^1\big|\mathcal{D}_{m_n}^{j, j'}(u, s)\big|^p\,{\rm d}u \\
        & \leq \sup_{c \in \mathbf{R}}
       \left( \int_{c-1}^{c+1} \big|D_{m_n}(\varphi^j(u)-c)\big|^p\,{\rm d}u + 
        \int_{-c-1}^{-c+1}\big|D_{m_n}(\varphi^j(u)+c)\big|^p\,{\rm d}u \right).
\end{align*}
Therefore, it is sufficient to show that
    \begin{equation}\label{bound1}
        \limsup_{m_n,n \to \infty}m_{n}\sup_{c \in \mathbb{R}}
        \int_{c-1}^{c+1}
        \big|D_{m_n}(\varphi^j(u)-c)\big|^p {\rm d}u < \infty.
    \end{equation}
Put 
$a:=\limsup_{m_n, n \to \infty} m_n \rho_n $ which is in $ [ 0, \infty) $ by the assumption, 
and let
    \begin{align*}
        J_n^{1}(u):=
        \int_{c-\frac{1}{2}}^{c+\frac{1}{2}}
        \bm{1}_{\{|u-c|>\frac{2a+2}{m_n}\}}\big|D_{m_n} ( \varphi^j(u)-c)\big|^p {\rm d}u
    \end{align*}
and 
\begin{align*}
       J_n^{2}(u):= \int_{c-1}^{c+1}
        \bm{1}_{
        \{|u-c|\leq\frac{2a+2}{m_n}\}}\big|D_{m_n} ( \varphi^j(u)-c)\big|^p {\rm d}u.
\end{align*}
For $m_n$ and $n$ large enough, we see that  
    \begin{align}\label{mn2}
    \begin{aligned}
            |u-c| &=\big|\varphi^{j}(u)-c-(\varphi^{j}(u)-u)| 
        \leq\big|\varphi^{j}(u)-c\big|+
     \big|\varphi^{j}(u)-u\big|\\
     &\leq\big|\varphi^{j}(u)-c\big|+\rho_n.
         \end{aligned}
    \end{align}
Therefore we obtain
\begin{align}\label{2pa}
\begin{aligned}
    |u-c|>\frac{2a+2}{m_n} &\Rightarrow \big|\varphi^{j}(u)-c\big|
\geq\frac{2}{m_n}+\frac{2a-m_n\rho_n}{m_n}>\frac{2+a}{m_n} \\
& \Rightarrow \frac{1}{2 m_n  \big|\varphi^{j}(u)-c\big|} \leq \frac{1}{2} \frac{1}{2+a}  < 1.
\end{aligned}
\end{align}
Since  
    \begin{equation}\label{Dirbound}
        \big|D_{m_n}(x)\big| \leq 1 \wedge \frac{1}{2m_n x} 
    \end{equation}
for $ x \in \mathbf{R} $, it follows from \eqref{2pa} that
\begin{equation}\label{bound3}
    \begin{split}
        J_n^{1}(u) \leq
        \int_{c-1}^{c+1}
        \bm{1}_{\{|u-c|>\frac{2a+2}{m_n}\}}\left|\frac{1}{2m_n(\varphi^j(u)-c)}\right|^p {\rm d}u.
        \end{split}
\end{equation}

Since \eqref{mn2} implies, for sufficiently large $ n $, 
$|u-c|>\frac{2a+2}{m_n} \Rightarrow\left|\varphi^{j}(u)-c\right|\geq|u-c|-\frac{2a}{m_n}$, one has
    \begin{align*}   
    \begin{aligned}
&\int_{c-1}^{c+1}
\bm{1}_{\{|u-c|>\frac{2a+2}{m_n}\}}\left|\frac{1}{2m_n(\varphi^j(u)-c)}\right|^p {\rm d}u \\
        &\leq\int_{c-1}^{c+1}
        \bm{1}_{\{|u-c|>\frac{2a+2}{m_n}\}}\left|\frac{1}{2m_n\left(|u-c|-\frac{2a}{m_n}\right)}\right|^p {\rm d}u,\\
        &= \int_{c-1}^{c+1}
     \bm{1}_{\{\frac{m_n}{2}|u-c|-a>1\}}\frac{1}{\left|4\left(\frac{m_n}{2}|u-c|-a\right)\right|^p} \,{\rm d}u, \\
\end{aligned}
\end{align*}
(by changing variables with $ w = \frac{m_n}{2} |u-c|-a$) 
\begin{align} \label{bound4}
\begin{aligned}
        &= 4 \int_{-a}^{\frac{m_n}{2}-a} 1_{\{w > 1\}}
   \frac{1}{4^p m_n} w^{-p} \,{\rm d}w \leq\left(\frac{1}{4}\right)^p\frac{4}{m_n}
        \int_1^{\infty}\omega^{-p}{\rm d}\omega.
        \end{aligned}
        \end{align}
This establishes \eqref{bound1}
since clearly one has, by \eqref{Dirbound}, 
\begin{align*}
    \begin{aligned}
    J_n^2(u) \leq   \int_{c-1}^{c+1}
        \bm{1}_{\{|u-c|\leq\frac{2a+2}{m_n}\}} {\rm d}u
        \leq \frac{4a+4}{m_n}.
    \end{aligned}
\end{align*}
\qed

\if2
\subsection{Proof of Lemma \ref{A1}}

Let us recall 
that in the proof of Theorem \ref{CLT with no micro} we have put
\begin{align*}
\begin{aligned}
    M_{m_n}(t)&= \int_0^t \left (\int_0^s \mathcal{D}_{m_n}^{j, j'}(s, u) \sum_r \sigma_r^{j'}(u) \,{\rm d}W_u^r \right) \sum_r \sigma_r^{j}(s)\,{\rm d}W_s^r,\\
     I_{m_n}^1(t)
        &=\int_0^t \left (\int_0^s \mathcal{D}_{m_n}^{j, j'}(s,u) b^{j'}(u) \,{\rm d}u \right) \sum_r \sigma_r^{j}(s)\,{\rm d}W_s^r, \\
     I_{m_n}^2(t)
        &= \int_0^t \left (\int_0^s \mathcal{D}_{m_n}^{j, j'}(s, u) \sum_r \sigma_r^{j'}(u)\, {\rm d}W_u^r \right) b^j(s) \, {\rm d}s, 
        \end{aligned}
\end{align*}
and 
\begin{align*}
      I_{m_n}^3(t)
        &=\int_0^t \left (\int_0^s \mathcal{D}_{m_n}^{j, j'}(s, u) b^{j'}(u)\, {\rm d}u \right)
        b^{j}(s) \,{\rm d}s.
\end{align*}
Here we omit the superscript $ j, j'$ for 
clarity. 

By It\^o's isometry, we have 
\begin{align*}
    \begin{aligned}
         \mathbf{E} [| I_{m_n}^1 (t) |^2] 
         & \leq  \mathbf{E} \left[ \int_0^t \sum_r |\sigma^j_r (s) |^2 \left(\int_0^s  |\mathcal{D}_{m_n}^{j, j'}(s, u) ||b^{j'} (
         u)| \, \mathrm{d}u \right)^2 \mathrm{d}s \right] \\
         & \leq A_4 \left(\int_0^t  \int_0^s |\mathcal{D}_{m_n}^{j, j'}(s, u) |\,{\rm d}u \mathrm{d}s\right)^2 ,
    \end{aligned}
\end{align*}
and 
\begin{align*}
    \begin{aligned}
         \mathbf{E} [| I_{m_n}^3 (t) |^2] 
         &\leq \mathbf{E} \left[ \left(\int_0^t |b^{j} (s)|\int_0^s  |\mathcal{D}_{m_n}^{j, j'}(s, u) ||b^{j'} (
         u)| \, \mathrm{d}u \mathrm{d}s \right)^2 \right] \\
         & \leq A_4 \int_0^t  \left(\int_0^s |\mathcal{D}_{m_n}^{j, j'}(s, u) |\,{\rm d}u \right)^2 \mathrm{d}s, 
    \end{aligned}
\end{align*}
while with the Schwartz and  Burkh\"older--Davis--Gundy inequality, 
we also obtain 
\begin{align*}
    \begin{aligned}
     & \mathbf{E} [| M_{m_n} (t) |^2] + \mathbf{E} [| I_{m_n}^2 (t) |^2] \\
   & \leq \int_0^1  \mathbf{E} \left[ \left(\int_0^s \mathcal{D}_{m_n}^{j, j'}(s, u)  \sum_r \sigma_r^{j'}(u) \,{\rm d}W_u^r \right)^4\right]^{1/2} \mathbf{E} \left[\left(\sum_r (\sigma_r^{j} (s))^2 + (b^{j} (s))^2 
        \right)^2 \right]^{1/2} \mathrm{d}s  
     \\
     &     \leq  C_{4,\mathrm{BDG}} \int_0^1  \mathbf{E} \left[ \left(\int_0^s (\mathcal{D}_{m_n}^{j, j'}(s, u))^2  \sum_r (\sigma_r^{j'}(u))^2 \,{\rm d}u \right)^2 \right]^{1/2}
     \sqrt{2} A_4^{1/2}  \mathrm{d}s \\ 
     \\ & \leq C_{4,\mathrm{BDG}} \sqrt{2} A_4 \int_0^1  \int_0^s (\mathcal{D}_{m_n}^{j, j'}(s, u) )^2\,{\rm d}u \mathrm{d}s,
    \end{aligned}
\end{align*}
where $ C_{4,\mathrm{BDG}}$ is the universal constant 
appearing in the Burkh\"older--Davis--Gundy inequality. 
\qed 
\fi

\subsection{A proof of Proposition \ref{PRSS}}\label{PrPr}

Under the condition that  $ \rho_n m_n^2 \to 0 $, we have, by a similar argument as the one we did for the proof of Lemma \ref{l21},
\begin{align*}
   m_n  \int_0^1 \int_0^u  \left(\mathcal{D}^{j,j'} (s,u) \mathcal{D}^{k,k'} (s,u)  -(\mathcal{D} (s,u))^2 \right) f(s,u) \,\mathrm{d}u \mathrm{d}s \to 0 \quad (n \to \infty).
\end{align*}
Therefore, it suffices to prove 
\begin{align*}
&  m \int_0^1 \int_0^u (\mathcal{D} (s,u))^2 f(s,u) \,\mathrm{d}u \mathrm{d}s -\frac{1}{2}\int_0^1 f(s,s) \, \mathrm{d}s \to 0 \quad (n \to \infty). 
\end{align*}
We note that, 
extending $ f(s,u) $ from $ \{ (s,u) : s\leq u \} $ to $ [0,1] $ symmetrically, 
\begin{align*}
    \begin{aligned}
       \int_0^1 \int_0^u (\mathcal{D} (s,u))^2 f(s,u) \,\mathrm{d}u \mathrm{d}s 
        = \frac{1}{2} \int_0^1 \int_0^1 |\mathcal{D} (s,u)|^2 f(s,u) \,\mathrm{d}u \mathrm{d}s.
    \end{aligned}
\end{align*}
Then, 
letting
\begin{align*}
    g(s) :=m\int_0^1 |\mathcal{D} (u,s)|^2\,\mathrm{d}u,
\end{align*}
we have 
\begin{align}
&  m \int_0^1 \int_0^u (\mathcal{D} (s,u))^2 f(s,u) \,\mathrm{d}u \mathrm{d}s -\frac{1}{2}\int_0^1 f(s,s) \, \mathrm{d}s  \nonumber \\
&= \frac{m}{2} \int_0^1 \int_0^1 |\mathcal{D} (s,u)|^2 ((f(s,u) -f(s,s))\,\mathrm{d}u \mathrm{d}s  -\frac{1}{2}\int_0^1 (1-g(s))f(s,s) \, \mathrm{d}s. \label{1-g}
\end{align}
By the expression \eqref{D-exp1},
\begin{align*}
    \begin{aligned}
        & 
        g(u) \\
       & = \frac{4}{m}\sum_{l=1}^{m}
        \sum_{l'=1}^{m}\cos\left(l-\frac{1}{2}\right)\pi u \cos\left(l'-\frac{1}{2}\right)\pi u
        \int_0^1 \cos\left(l-\frac{1}{2}\right)\pi s \cos\left(l'-\frac{1}{2}\right)\pi s \,\mathrm{d}s.
    \end{aligned}
\end{align*}
Since it holds that
\begin{align*}
    \begin{aligned}
       & \int_0^1 \cos\left(l-\frac{1}{2}\right)\pi s \cos\left(l'-\frac{1}{2}\right)\pi s \,\mathrm{d}s \\
       &= \frac{1}{2} \int_0^1 \left(\cos(l+l'-1)\pi s + \cos(l-l')\pi s \right)\,\mathrm{d}s \\
       & = \begin{cases}
           1/2 & l=l' \\
           0 & l \ne l'
       \end{cases},
    \end{aligned}
\end{align*}
we have
\begin{align*}
    \begin{aligned}
        & g(u) 
        = \frac{2}{m}\sum_{l=1}^{m}\cos^2\left(l-\frac{1}{2}\right)\pi u 
        \\
        &= 1 + \frac{1}{m} \sum_{l=1}^m 
        \cos (2l-1) \pi u \\
        &= 1+ \frac{1}{2m} \frac{\sin 2 m \pi u}{\sin 2 \pi u} 
        = 1 + D_m (2u). 
    \end{aligned}
\end{align*}
Then, by Lemma \ref{lemma3.2}, 
we see that 
\begin{align*}
    \frac{1}{2}\int_0^1 (1-g(s))f(s,s) \, \mathrm{d}s \to 0 \quad (m \to \infty).
\end{align*}

Finally we shall prove 
the convergence of the first term in \eqref{1-g}. Recalling \eqref{key equality}, we have
\begin{align}\label{2a2}
    \begin{aligned}
      & m  \left| \int_0^1 \int_0^1 |\mathcal{D} (s,u)|^2 (f(s,u) -f(s,s))\,\mathrm{d}u \mathrm{d}s \right| \\
     &\leq \int_{[0,1]^2} 2m ((D_m (u+s))^2 
         + (D_m (u-s))^2) | f(s,u) -f(s,s) |
         \,\mathrm{d}u \mathrm{d}s.
    \end{aligned}
\end{align}
We rely on the uniformly continuity of $ f $. 
For arbitrary sufficiently small $ \varepsilon > 0 $, 
we can take $ \delta > 0 $
such that 
\begin{align*}
 |s-u| <\delta 
  \Rightarrow |f(s,u)-f(s,s)| < \varepsilon.
\end{align*}
Let 
\begin{align*}
    A^+_\delta := \{ (s,u) \in [0,1]^2:
    \delta/2 < s+u < \delta/2 \},
\end{align*}
and 
\begin{align*}
    A^-_\delta := \{ (s,u) \in [0,1]^2:
    |s-u| < \delta \}.
\end{align*}
Then clearly $ (s,u) \in A^{\pm}_\delta $ 
satisfies $ |s-u|< \delta $ and therefore 
$  |f(s,u)-f(s,s)| < \varepsilon $.
Then, we can bound the right-hand-side of \eqref{2a2} by
\begin{align*}
    \begin{aligned}
    4 \Vert f\Vert_\infty \left(\int_{A^+_\delta} \frac{1}{2m} 
    \frac{\sin^2 m \pi (u+s)}{\sin^2 \pi (u+s)/2}\,\mathrm{d}u \mathrm{d}s
    + 
     \int_{A^-_\delta} \frac{1}{2m} 
   \frac{\sin^2 m \pi (u-s)}{\sin^2 \pi (u-s)/2}\,\mathrm{d}u \mathrm{d}s \right) \\
    + \varepsilon \left(\int_{[0,1]^2} \frac{1}{2m} \left(
    \frac{\sin^2 m \pi (u+s)}{\sin^2 \pi (u+s)/2} +
    \frac{\sin^2 m \pi (u-s)}{\sin^2 \pi (u-s)/2} \right)\,\mathrm{d}u \mathrm{d}s \right).    
    \end{aligned}
\end{align*}
Since 
\begin{align*}
    \begin{aligned}
        \int_{A^+_\delta} \frac{1}{2m} 
    \frac{\sin^2 m \pi (u+s)}{\sin^2 \pi (u+s)/2}\,\mathrm{d}u \mathrm{d}s
    +  \int_{A^-_\delta} \frac{1}{2m} 
   \frac{\sin^2 m \pi (u-s)}{\sin^2 \pi (u-s)/2}\,\mathrm{d}u \mathrm{d}s
    \leq 2 \int_\delta^{1} \frac{1}{2my^2}\,\mathrm{d}y \leq \frac{1}{m\delta}
    \end{aligned}
\end{align*}
and
\begin{align*}
    \begin{aligned}
      &  \int_{[0,1]^2} \frac{1}{2m} \left(
    \frac{\sin^2 m \pi (u+s)}{\sin^2 \pi (u+s)/2} +
    \frac{\sin^2 m \pi (u-s)}{\sin^2 \pi (u-s)/2} \right)\,\mathrm{d}u \mathrm{d}s \\
    &= \frac{1}{2m} \int_{[0,1]^2} \left(\left(\sum_{l=-m+1}^m e^{(l-\frac{1}{2})\pi (s+u)} 
    \right)^2 + \left(\sum_{l=-m+1}^m e^{(l-\frac{1}{2})\pi (s-u)} \right)^2\right)
    \,\mathrm{d}u \mathrm{d}s 
    = \frac{1}{2},
    \end{aligned}
\end{align*}
we have
\begin{align*}
    \begin{aligned}
         & m  \left| \int_0^1 \int_0^1 |\mathcal{D} (s,u)|^2 (f(s,u) -f(s,s))\,\mathrm{d}u \mathrm{d}s \right| \leq \frac{4 \Vert f\Vert_\infty}{m\delta} + \frac{\varepsilon}{2},
    \end{aligned}
\end{align*}
which shows the convergence to zero (as $ m \to \infty $) of the first term in \eqref{1-g}.
\qed
\end{document}